\newtheorem{theorem}{Theorem}
\newtheorem{lemma}{Lemma}
\newtheorem{prop}{Proposition}
\newtheorem{corollary}{Corollary}
\newcommand{\indicator}{{\bf 1}}
\newcommand{\field}{\mathbb{F}}
\renewcommand{\E}{\mathbb{E}}
\newcommand{\given}{\, | \,}
\newcommand{\convL}{\, \overset{L_1}{\longrightarrow} \,}
\newcommand{\Var}{\mathbb{V}{\rm ar}}
\newcommand{\convLL}{\, \overset{L_2}{\longrightarrow} \,}
\newcommand{\Given}{\, \Bigg{|} \,}
\newcommand{\convP}{\, \overset{P}{\longrightarrow} \,}
\newcommand{\ggiven}{\, \Big{|} \,}
\newcommand{\convD}{\, \overset{D}{\longrightarrow} \,}
\newcommand{\convas}{\, \overset{a.s.}{\longrightarrow} \,}
\begin{document}
\begin{center}
	{\Large \bf  
		Several topological indices of random caterpillars}
	
	\bigskip
	{\bf Panpan Zhang\footnote{\, Corresponding author; Email: 
	panpan.zhang@pennmedicine.upenn.edu.}}
	
	\medskip
	{\tiny Department of Biostatistics, Epidemiology and Informatics, Perelman School of Medicine, University of Pennsylvania, Philadelphia, PA 19104, U.S.A.}
	
	\medskip
	{\bf Xiaojing Wang\footnote{\, Email: xiaojing.wang@uconn.edu.}}
	
	\medskip
	{\tiny Department of Statistics, University of Connecticut, Storrs, CT 06269, U.S.A.}
	
	\bigskip
	
	\today
\end{center}

\bigskip\noindent
{\bf Abstract.}  In chemical graph theory, caterpillar trees have been an appealing model to represent the molecular structures of benzenoid hydrocarbon. Meanwhile, topological index has been thought of as a powerful tool for modeling quantitative structure-property relationship and quantitative structure-activity between molecules in chemical compounds. In this article, we consider a class of caterpillar trees that are incorporated with randomness, called random caterpillars, and investigate several popular topological indices of this random class, including Zagreb index, Randi\'{c} index and Wiener index, etc. Especially, a central limit theorem is developed for the asymptotic distribution of the Zagreb index of random caterpillars.

\bigskip
\noindent{\bf AMS subject classifications.}

Primary: 05C80, 92E10

Secondary: 50C10, 60F05

\bigskip
\noindent{\bf Key words.} Caterpillar tree, chemical graph theory, central limit theorem, martingale,  random caterpillars, topological index

\section{Introduction}
\label{Sec:Intro}

In this article, we investigate several topological indices of a class of random graphs. The {\em topological index} of a graph is a graph-invariant descriptor that quantifies its structure or some kind of feature. Topological index has found a plethora of applications in chemical graph theory, mathematical chemistry and chemoinformatics. In practice, various kinds of topological indices, such as Zagreb index~\cite{Gutman1972} and Randi\'{c} index~\cite{Randic}, are used to compare molecular graphs of chemical compounds~\cite{Todeschini}, and to model quantitative structure-property relationship (QSPR) and quantitative structure-activity relationship (QSAR) between molecules~\cite{Devillers}. We refer interested readers to~\cite{Todeschini} for a text-style exposition of utilization of topological index in chemistry.

Specifically, we look into a class of random graphs that incorporate randomness into {\em caterpillar graphs}, i.e., {\em random caterpillars}. In mathematical chemistry and chemoinformatics, a caterpillar graph (or simply caterpillar) is an acyclic graph with the property that there remains a path (called spine) if all leaves are pruned, best known for modeling the structure and intrinsic properties of benzoid hydrocarbon molecules~\cite{Andrade, ElBasil, ElBasil1990, Gutman1985}.

Our motivation of incorporating randomness and caterpillar graphs is from a recent article~\cite{Kryven}, in which a random graph model was utilized to model the chemistry of a discrete polymerization process. More precisely, we consider random caterpillars that grow in a uniform manner. At time $0$, there is a {\em spine} consisting of $m \ge 2$ (fixed) nodes. At each subsequent point of time, a {\em leaf} is connected with one of the spine nodes by an edge, all spine nodes being equally likely to be selected. At time $n$, we denote the structure of a random caterpillar by $C_n$. 

At first, we present some known results of $C_n$ as preliminaries, and give some notations that will be used throughout the manuscript. At time $n$, there is a total of $(n + m)$ nodes. Additionally, the total number of edges is fixed, i.e., $(n + 2m - 2)$. Enumerate $m$ spine nodes in a preserved order (e.g., from left to right) with distinct numbers in $\{1, 2, \ldots, m\}$. Let $X_{i, n}$ be the number of leaves attached to spine node $i$ for $i = 1, 2, \ldots, m$, and let $D_{i, n}$ be the degree of spine node $i$. According to the evolution of random caterpillars, we know that the joint distribution of $X_{i, n}$'s is multinomial with parameters $n$ and $\mathbb{I}_{m \times 1}/m$, where $\mathbb{I}_{m \times 1}$ is a column vector of all $1$'s. Additionally, there is an instantaneous relation between $X_{i, n}$ and $D_{i, n}$. That is $D_{i, n} = X_{i, n} + 1$ for $i = 1, m$; $D_{i, n} = X_{i, n} + 2$ for $i = 2, 3, \ldots, m - 1$. 

In the remainder of the paper, we calculate several topological 
indices of $C_n$. They are Gini index, Hoover index, Zagreb index, 
Randi\'{c} index and Wiener index, respectively presented from 
Section~\ref{Sec:Gini} to Section~\ref{Sec:Wiener}. We give the 
definition of each index and some brief introductions about their 
applications that initiate and promote the motivations of our 
analysis in the sequel. In Section~\ref{Sec:numeric}, we carry out 
numerical experiments to verify the theoretical results developed in 
the preceding sections. Some concluding remarks are addressed at the 
end of the article .

\section{Gini index}
\label{Sec:Gini}

The {\em Gini index} (or called Gini coefficient) is a widely used measure in economics~\cite{Gini}, mainly known for assessing statistical dispersion of income or wealth of a population. More precisely, the Gini index (of a target population) is a number measuring the degree of inequality in (income or wealth) distribution. Given a population of size $n$, the {\em Gini index} is given by
$$G = \frac{\sum_{i = 1}^{n} \sum_{j = 1}^{n} |w_i - w_j|}{2n\sum_{i = 1}^{n} w_i},$$
where $w_i$ refers to the wealth of person $i$.

Recently, different types of Gini index that are well defined for graphs were proposed. We present the results only in this section without proof, as random caterpillars were used as examples in all the relevant sources. A distance-based Gini index for rooted trees was given in~\cite{Balaji}, used as a measure of disparities of trees within random tree classes. Let $B_n$ be the distance-based Gini index of $C_n$. The exact and the asymptotic mean of $B_n$ were calculated in~\cite{Balaji}, and revisited by~\cite{Zhang}.

\begin{prop}[\cite{Balaji}]
	The mean of the distance-based Gini index for a random caterpillar is
	$$\E\left[G_n\right] = \frac{(2m^2 - 2)n^2 + (m^3 + 4m^2 - m + 2)n + 2m^4 - 2m^2}{(6m^2 + 6m)n^2 + 12m^3n + 6m^4 - 6m^3}.$$
	As $n \to \infty$ and $m \to \infty$, the expectation of this index converges to $1/3$.
\end{prop}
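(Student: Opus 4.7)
The plan is to reduce the distance-based Gini index $G_n$ to a deterministic function of the random multinomial vector $(X_{1,n},\dots, X_{m,n})$ and then apply standard multinomial moment formulas. Because the distance between two vertices of a caterpillar depends only on the spine positions of their attachment points, every summand appearing in $G_n$ can be grouped into a bounded number of ``types'' indexed by spine positions; this turns the problem into the computation of the expectation of an explicit polynomial of degree at most two in $X_{1,n},\ldots,X_{m,n}$.

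Concretely, I would first fix a root (say $s_1$) and classify each vertex by its depth. The depth takes one of the values $0,1,\ldots,m$ with multiplicities $n_0 = 1$, $n_k = 1 + X_{k,n}$ for $1 \le k \le m-1$, and $n_m = X_{m,n}$. With $D_n := \sum_v d(v, s_1) = \sum_{k=0}^m k\, n_k$, the numerator of $G_n$ rewrites as
\[
\sum_{u, v} |d(u, s_1) - d(v, s_1)| \;=\; 2\sum_{0 \le k<\ell \le m}(\ell - k)\, n_k n_\ell,
\]
and the denominator takes the analogous form of a polynomial of degree at most two in the $X_{k,n}$'s under the normalization of~\cite{Balaji}. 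I would then take expectations term by term via the multinomial moments
\[
\E[X_{k,n}] = \frac{n}{m},\qquad \E[X_{k,n}^2] = \frac{n(n-1)}{m^2} + \frac{n}{m},\qquad \E[X_{k,n} X_{\ell,n}] = \frac{n(n-1)}{m^2}\ \ (k\ne \ell),
\]
and collect powers of $n$ and $m$ to produce the closed-form rational function displayed in the statement.

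The main obstacle is expected to be the bookkeeping in the weighted double sum for the numerator, whose coupling weight $\ell-k$ must be tracked across the three sub-cases $k=0$, $\ell=m$, and $1 \le k < \ell \le m-1$. In each case the product $n_k n_\ell$ splits into constant, linear, diagonal-quadratic, and off-diagonal-quadratic pieces in the $X_{k,n}$'s, each contribution being weighted by $\ell-k$ and then summed using the elementary identities $\sum_{k=1}^{m-1} k = m(m-1)/2$, $\sum_{k=1}^{m-1} k^2 = (m-1)m(2m-1)/6$, and $\sum_{1\le k<\ell\le m-1}(\ell - k) = \binom{m}{3}$. After collecting and simplifying, the contributions will line up with the coefficients of $n^2$, $n$ and the constant term in the stated numerator.

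The asymptotic claim then follows immediately by inspection of the explicit formula: as $n \to \infty$, the leading $n^2$ terms give $\E[G_n] \to (2m^2-2)/(6m^2+6m) = (m-1)/[3m]$, and letting $m \to \infty$ sends this ratio to $1/3$.
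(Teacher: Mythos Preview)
The paper does not prove this proposition at all: it is quoted from~\cite{Balaji} and the authors state explicitly that the results in Section~\ref{Sec:Gini} are presented ``without proof, as random caterpillars were used as examples in all the relevant sources.'' So there is no paper proof to compare against, and your proposal is to be judged on its own.

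Your plan---classify vertices by depth from a fixed spine endpoint, express the multiplicities as $n_0=1$, $n_k=1+X_{k,n}$, $n_m=X_{m,n}$, and reduce everything to multinomial second moments---is exactly the natural route and matches what is done in~\cite{Balaji}. The bookkeeping you describe (splitting the weighted sum $\sum_{k<\ell}(\ell-k)n_k n_\ell$ into the cases $k=0$, $\ell=m$, interior, and then into linear/diagonal/off-diagonal pieces) is correct, and the asymptotic step at the end is fine.

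One point you should make precise before carrying this out: the Gini index, as written, has a \emph{random} denominator $2(n+m)\sum_v d(v,s_1)$, and the expectation of a ratio is not the ratio of expectations. Your proposal quietly takes expectations ``term by term,'' which only yields the displayed rational function if the quantity being averaged is a polynomial in the $X_{k,n}$'s, not a ratio of two such polynomials. In fact the denominator of the stated formula factors as $6m(n+m)\bigl[(m+1)n+m(m-1)\bigr] = 6m\cdot 2(n+m)\,\E\!\left[\sum_v d(v,s_1)\right]$, which strongly suggests that the index in~\cite{Balaji} is normalized by the \emph{expected} total depth (or, equivalently, that what is reported is $\E[\text{numerator}]/\E[\text{denominator}]$). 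You should check the precise definition in~\cite{Balaji} and state it; once that is done, your computation goes through as written.
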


In an independent work, a degree-based Gini index for general graphs was proposed by~\cite{Domicolo}. This particular Gini index, in general, is used to assess the regularity of classes of random graphs. A follow-up study that uncovers a duality theory is conducted in~\cite{Domicoloduality}. To avoid ambiguity, we denote the degree-based Gini index in ~\cite{Domicolo} by $\tilde{B}_n$.

\begin{prop}[\cite{Domicolo}]
	The mean of the degree-based Gini index for the class of random caterpillars is
	$$\tilde{B}_n \convL \frac{1}{2},$$
	as $n \to \infty$.
\end{prop}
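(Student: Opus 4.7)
The plan is to open up the definition of the degree-based Gini index,
$$\tilde{B}_n = \frac{\sum_{i,j=1}^{n+m}|d_i - d_j|}{2(n+m)\sum_{i=1}^{n+m}d_i},$$
and to split the numerator according to the bipartition of the $n+m$ vertices into the $m$ spine nodes (with degrees $D_{1,n},\ldots,D_{m,n}$) and the $n$ leaves (each of degree $1$). Leaf--leaf pairs contribute $0$. Each unordered leaf--spine pair contributes $D_{j,n}-1\ge 0$, and summing over ordered pairs gives exactly $2n\sum_{j=1}^m(D_{j,n}-1) = 2n(n+m-2)$, using the identity $\sum_{j=1}^m D_{j,n} = n+2m-2$ already recorded in the preliminaries. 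All of the stochastic content therefore sits in the spine--spine remainder
$$R_n := \sum_{i,j=1}^m |D_{i,n} - D_{j,n}|.$$

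Since the total degree equals $(n+2m-2)+n = 2(n+m-1)$, the denominator of $\tilde{B}_n$ is the deterministic quantity $4(n+m)(n+m-1)$, so
$$\tilde{B}_n = \frac{2n(n+m-2) + R_n}{4(n+m)(n+m-1)}.$$
A straightforward algebraic simplification then gives
$$\tilde{B}_n - \frac{1}{2} = \frac{R_n - 2n(m+1) - 2m(m-1)}{4(n+m)(n+m-1)}.$$

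To obtain the claimed $L_1$ convergence I would apply the crude deterministic bound $R_n \le m^2\max_i D_{i,n} \le m^2(n+2)$, which holds because every spine degree is sandwiched between $1$ and $n+2$. This yields
$$\E\left|\tilde{B}_n - \tfrac{1}{2}\right| \le \frac{m^2(n+2) + 2n(m+1) + 2m(m-1)}{4(n+m)(n+m-1)} = O(1/n),$$
from which $\tilde{B}_n \convL \frac{1}{2}$ follows immediately (indeed, almost-sure convergence holds as well). There is no real obstacle in this argument: the caterpillar has only two ``types'' of vertices---leaves of degree $1$ and random spine degrees bounded by $n+2$---and the leaf--spine block of the double sum, which is deterministic and of order $n^2$, already pins the Gini quotient to $1/2$ to leading order, regardless of how the spine degrees are actually distributed among themselves. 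A sharper rate could be extracted by replacing the trivial bound on $R_n$ with a concentration estimate derived from the multinomial law of $(X_{1,n},\ldots,X_{m,n})$, but nothing beyond the trivial bound is required for the stated conclusion.
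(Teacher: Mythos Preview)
Your argument is correct. The paper itself does not supply a proof of this proposition: in Section~\ref{Sec:Gini} the authors explicitly state that the results on the Gini index are ``presented only in this section without proof,'' citing~\cite{Domicolo} as the source. So there is no proof in the paper to compare against.

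Your decomposition of the numerator into leaf--leaf, leaf--spine, and spine--spine blocks is the natural one, and the arithmetic checks: using $\sum_{j=1}^m D_{j,n}=n+2m-2$ and total degree $2(n+m-1)$ gives exactly the expression you wrote for $\tilde{B}_n-\tfrac12$. The crude bound $R_n\le m^2(n+2)$ is deterministic, so you in fact obtain $|\tilde{B}_n-\tfrac12|=O(1/n)$ surely, which is stronger than the stated $L_1$ convergence. One minor caveat: the degree-based Gini index of~\cite{Domicolo} is defined for a \emph{class} of graphs, with the denominator replaced by the expected order times the expected degree of a uniformly chosen vertex (cf.\ the analogous Hoover definition in Section~\ref{Sec:Hoover}); for random caterpillars, however, both the order $n+m$ and the total degree $2(n+m-1)$ are deterministic, so this distinction is immaterial and your formula coincides with the class index.
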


A slightly different degree-based Gini index for random caterpillars 
was discussed in an independent source~\cite{Zhang}, where a 
different target population was considered. It is evident that this 
type of Gini index degenerates as $n$ goes to infinity, shown 
in~\cite{Zhang} via numerical experiments and 
in~\cite{Domicoloduality} via a probabilistic approach.

\section{Hoover index}
\label{Sec:Hoover}

Another important topological measure with applications in economics is the {\em Hoover index}~\cite{Hoover}, which is also known as the Robin Hood Index or the Schutz index in the literature. Like the Gini index, the Hoover index is another inequality metric used for measuring the deviation of the current (income or wealth) distribution from the perfectly even distribution. An alternative interpretation of this index is the portion of population income that would be taken from the richer half to the poorer half for the whole community to be perfectly equal. Mathematically, the {\em Hoover index} of a population with size $n$ is
\begin{equation}
\label{Eq:Hoover}
H=\frac{n}{2}\times \frac{\sum_{i = 1}^{n} |w_i - \bar{w}|}{\bar{w}},
\end{equation}
where $\bar{w}=1/n\sum_{i=1}^n w_i$ is the average of the entire population wealth. A graphical interpretation of the Hoover index is the longest vertical distance between the {\em Lorenz curve} and the $45$ degree line of a unit square. Thus, we immediately have $0 \le H < 1$.

A graph-friendly Hoover index is to replace all $w_i$'s in Equation~(\ref{Eq:Hoover}) with node degrees. Our intent is to consider the Hoover index for a class of graphs. Therefore, we propose a degree-based Hoover index for graphs analogous to the degree-based Gini index introduced in~\cite{Domicolo} as a competing measure for assessing graph regularity. Given an arbitrary graph $G = (V, E) \in \mathsf{G}$, where $V$ and $E$ respectively denotes the vertex set and the edge set of graph $G$ and $\mathsf{G}$ is the class to which $G$ belongs, the degree-based Hoover index is defined as follows:
$$H(G) = \frac{1}{2} \times \frac{\sum_{v \in V} \bigl|{\rm deg}(v) - \sum_{v \in V} {\rm deg}(v) / |V| \big|}{\E \bigl[|\mathcal{V}|\bigr] \times \E \bigl[{\rm deg}(\mathcal{U}) \bigr]},$$
where ${\rm deg}(v)$ represents the {\em degree} (i.e., the number of edges incident to $v$) of~$v$, $|V|$ is the cardinality of set $V$, $\E \bigl[|\mathcal{V}|\bigr]$ is the expected {\em order} (i.e., the number of nodes) of a randomly chosen graph in $\mathsf{G}$, and $\E \bigl[{\rm deg}(\mathcal{U}) \bigr]$ is the degree of a randomly chosen node in a randomly chosen graph in $\mathsf{G}$. The Hoover index of the class $\mathsf{G}$, $H(\mathsf{G})$, is the average of all $H(G)$ for $G$ in $\mathsf{G}$. An argument similar to~\cite[Theorem 4.1]{Domicolo} can be established to show that the Hoover index of a graph class takes values between $0$ and $1$ asymptotically, and a value closer to $0$ suggests that the graphs in the class tend to be more regular.

The order of an arbitrary $C_n$ is fixed, i.e., $(n + m)$. Let $\mathcal{U}_{C_n}$ be a randomly selected node of $C_n$ uniformly chosen from the class of random caterpillars. We have
$$
\E \bigl[{\rm deg}(\mathcal{U}_{C_n}) \bigr] = \frac{\sum_{i = 1}^{m} D_{i, n} + \sum_{i = 1}^{m} X_{i, n}}{n + m} = \frac{2n + 2m - 2}{n + m} = 2 - \frac{2}{n + m}.
$$

Next, we compute the numerator. Note that all leaves have degree $1$, less than the average. The probability that the (spine) nodes at the two ends of the spine are never selected in a long run is negligible. Hence, with high probability, the degree of each spine node is larger than the average. Thus, the expectation of the numerator is equivalent to
$$
\sum_{i = 1}^{m} \left(\E[D_{i, n}] - \frac{2n + 2m - 2}{n + m}\right) + n \, \left(\frac{2n + 2m - 2}{n + m} - 1\right) = \frac{2n(n + m - 2)}{n + m}.
$$
In what follows, we get the asymptotic mean of the Hoover index of the class of random caterpillars at time $n$, denoted by $H_n$, in the next proposition.
\begin{prop}
	\label{Prop:meanH}
	The mean of the Hoover index of the class of random caterpillars 
	$$H_n \, \to \, \frac{1}{2},$$
	as $n \to \infty$.
\end{prop}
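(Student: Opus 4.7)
The plan is to make rigorous the heuristic computation sketched in the paragraph preceding the proposition. Set
\[
\bar{d}_n := \frac{2n+2m-2}{n+m}, \qquad S_n := \sum_{v \in V(C_n)} \bigl|\mathrm{deg}(v) - \bar{d}_n\bigr|,
\]
so that, using the already-established values $\E\bigl[|\mathcal{V}_{C_n}|\bigr] = n+m$ and $\E\bigl[\mathrm{deg}(\mathcal{U}_{C_n})\bigr] = 2 - 2/(n+m)$, the Hoover index satisfies
\[
H_n \;=\; \frac{1}{2} \cdot \frac{\E[S_n]}{(n+m)\bigl(2 - 2/(n+m)\bigr)} \;=\; \frac{\E[S_n]}{4(n+m-1)}.
\]
The proposition therefore reduces to showing $\E[S_n] = 2n(n+m-2)/(n+m) + o(1)$, after which letting $n \to \infty$ immediately yields $H_n \to 1/2$.

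First I would decompose $S_n$ by node type. Since $\bar{d}_n > 1$ whenever $n + m > 2$, each of the $n$ leaves contributes exactly $\bar{d}_n - 1$, giving the deterministic leaf contribution $n(\bar{d}_n - 1) = n(n+m-2)/(n+m)$. For the spine I would introduce the event
\[
A_n := \bigl\{D_{i,n} > \bar{d}_n \text{ for every } i = 1, \ldots, m\bigr\}.
\]
On $A_n$, the handshake identity $\sum_{i=1}^m D_{i,n} = n+2m-2$ causes the spine sum to collapse to the \emph{deterministic} value
\[
\sum_{i=1}^m |D_{i,n} - \bar{d}_n| \;=\; \sum_{i=1}^m (D_{i,n} - \bar{d}_n) \;=\; (n+2m-2) - m\bar{d}_n \;=\; \frac{n(n+m-2)}{n+m},
\]
so that $S_n \indicator_{A_n} = \bigl(2n(n+m-2)/(n+m)\bigr)\indicator_{A_n}$.

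The main obstacle, and the only place where real probabilistic input is used, is to show that $A_n^c$ contributes negligibly. Marginally $X_{i,n} \sim \mathrm{Binomial}(n, 1/m)$ with mean $n/m \to \infty$, while $\bar{d}_n < 2$, so the event $\{D_{i,n} \le \bar{d}_n\}$ forces $X_{i,n}$ to be bounded by a constant; a Chernoff estimate followed by a union bound over the $m$ fixed spine nodes therefore gives $P(A_n^c) \le m\exp(-c(m)\,n)$ for all $n$ large. Combined with the crude deterministic bound $S_n \le 2n + 4m - 2 = O(n)$ (from $|D_{i,n} - \bar{d}_n| \le D_{i,n} + \bar{d}_n$ together with the handshake identity), this yields $\E\bigl[S_n\indicator_{A_n^c}\bigr] = O\bigl(n e^{-c(m)n}\bigr) = o(1)$. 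Writing
\[
\E[S_n] \;=\; \frac{2n(n+m-2)}{n+m}\,P(A_n) + \E\bigl[S_n\indicator_{A_n^c}\bigr] \;=\; \frac{2n(n+m-2)}{n+m} + o(1),
\]
and substituting into the expression for $H_n$ above completes the proof.
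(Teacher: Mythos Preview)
Your proof is correct and follows the same approach as the paper: decompose the numerator into leaf and spine contributions, observe that on the high-probability event where every spine degree exceeds $\bar d_n$ the spine contribution collapses via the handshake identity, and then plug into the denominator $2(n+m)\bigl(2-2/(n+m)\bigr)=4(n+m-1)$ to obtain the limit $1/2$. The only difference is one of rigor: the paper simply asserts, just before the proposition, that ``with high probability, the degree of each spine node is larger than the average'' and then treats the resulting numerator $2n(n+m-2)/(n+m)$ as if it were the exact expectation, whereas you quantify this via the event $A_n$, bound $P(A_n^c)$ (your Chernoff argument is in fact overkill---for interior spine nodes $D_{i,n}\ge 2>\bar d_n$ deterministically, and for the two endpoints $D_{i,n}\le\bar d_n$ forces $X_{i,n}=0$, which has probability $(1-1/m)^n$), and control $\E[S_n\indicator_{A_n^c}]$ with the crude $O(n)$ bound on $S_n$. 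What you gain is an honest $o(1)$ error term; what the paper gains is brevity.
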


\begin{proof}
	By the definition of the Hoover index, we have
	$$
	H_n = \E\bigl[H(C_n)\bigr] = \frac{2n(n + m - 2)/(n + m)}{2(n + m)\bigl(2 - 2/(n + m)\bigr)} \, \to \, \frac{1}{2},
	$$
	as $n \to \infty$.
\end{proof}

\section{Zagreb index}
\label{Sec:Zagreb}

In this section, we calculate the Zagreb index of a random 
caterpillar at time $n$, denoted by $Z_n = \mbox{{\tt 
Zagreb}}(C_n)$. The {\em Zagreb index} of a graph is defined as the 
sum of the squared degrees of the nodes in the graph. Applications 
of Zagreb index mostly appear in mathematical chemistry, used to 
study molecular complexity~\cite{Nikolic}, 
chirality~\cite{Golbraikh}, ZE-isomerism~\cite{Golbraikh2002} and 
heterosystems~\cite{Milicevic}. It is not even possible to list them 
all, so we refer the interested readers to a survey 
article~\cite{Nikolic2003}, in which the authors emphasized the 
potential applicability of Zagreb index for deriving multilinear 
regression models. In the literature of random graphs and 
algorithms, the Zagreb indices of random recursive trees, random 
$b$-ary tree, plain-oriented recursive trees and preferential 
attachment caterpillars were respectively studied in~\cite{Feng, 
Feng2015, Zhangarxiv, Zhangarxiv2019}. 

By definition, the Zagreb index of random caterpillars is given by
$$Z_n = \sum_{i = 1}^{m} D^2_{i, n} + \sum_{i = 1}^{m} X_{i, n}.$$
Here, first, we give some additional useful notations. Let $\indicator_n(i)$ denote the event that the spine node labeled with $i$ is selected at time $n$, and let $\field_n$ denote the $\sigma$-field generated by the history of the growth of a caterpillar in the first $n$ stages. We present the mean of $Z_n$ as well as a weak law in the next proposition.

\begin{prop}
	\label{Prop:meanZ}
	For $n \ge 0$, we have
	$$\E[Z_n] = \frac{n^2}{m} + \frac{(6m - 5)n}{m} + 4m - 6.$$
	As $n \to \infty$, we have
	$$\frac{Z_n}{n^2} \convL \frac{1}{m}.$$
	This convergence takes place in probability as well.
\end{prop}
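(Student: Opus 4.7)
My plan is to handle the two claims separately, exploiting the fact that by construction the vector $(X_{1,n}, \ldots, X_{m,n})$ is multinomial with equal cell probabilities $1/m$, so each marginal $X_{i,n}$ is $\mathrm{Bin}(n, 1/m)$.

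For the mean, I would begin by splitting
\[
Z_n = \sum_{i=1}^{m} D_{i,n}^{2} + \sum_{i=1}^{m} X_{i,n},
\]
in which the second sum is deterministically $n$. Writing $D_{i,n} = X_{i,n} + c_i$ with $c_i = 1$ for the two endpoints $i \in \{1, m\}$ and $c_i = 2$ for the $m-2$ interior spine nodes, I expand $D_{i,n}^{2} = X_{i,n}^{2} + 2 c_i X_{i,n} + c_i^{2}$. Plugging in $\E[X_{i,n}] = n/m$ and $\E[X_{i,n}^{2}] = n(m-1)/m^{2} + n^{2}/m^{2}$, and using $\sum_{i=1}^{m} c_i = 2m - 2$ together with $\sum_{i=1}^{m} c_i^{2} = 4m - 6$, the claimed closed form falls out by elementary arithmetic. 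The only bookkeeping care is to keep the two endpoint spine nodes separate from the $m-2$ interior ones, since otherwise the constant $4m - 6$ is miscounted.

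For the limit $Z_n/n^{2}$, I would first establish almost sure convergence and then upgrade to $L_1$. Since $X_{i,n}$ is a sum of $n$ i.i.d. $\mathrm{Bernoulli}(1/m)$ indicators (one per leaf), the strong law of large numbers gives $X_{i,n}/n \to 1/m$ almost surely for each $i$, hence $D_{i,n}^{2}/n^{2} \to 1/m^{2}$ almost surely. Summing over the $m$ spine nodes, and noting that the leaf contribution $\sum_{i} X_{i,n}/n^{2} = 1/n \to 0$, yields $Z_n/n^{2} \to 1/m$ almost surely, which in particular gives convergence in probability.

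To upgrade almost sure convergence to $L_1$, I would invoke dominated convergence via the crude bound $D_{i,n} \le n + 2$, which gives $Z_n/n^{2} \le m(n+2)^{2}/n^{2} + 1/n$, a quantity bounded by an absolute constant for all $n \ge 1$. I do not anticipate a genuine obstacle in this proposition; the computation is elementary once the multinomial structure of the leaf counts is recognized. The real work is deferred to the subsequent central limit theorem for $Z_n$, where a martingale decomposition of the centred and rescaled sequence will be needed in order to identify the limiting variance.
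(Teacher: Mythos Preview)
Your proof is correct but takes a genuinely different route from the paper. For the mean, the paper develops the almost-sure one-step recursion $Z_n = Z_{n-1} + 2D_{i,n-1} + 2$ (conditional on spine node $i$ being chosen at time $n$), averages over $i$ to obtain $\E[Z_n \mid \field_{n-1}] = Z_{n-1} + 2(n+2m-3)/m + 2$, and then solves the resulting first-order recurrence for $\E[Z_n]$ with initial value $Z_0 = 4m-6$. You instead exploit the multinomial structure of the leaf counts directly and expand $D_{i,n}^2$, which is shorter and entirely elementary for this proposition in isolation. The payoff of the paper's recursive approach is that the conditional expectation $\E[Z_n \mid \field_{n-1}]$ it produces is precisely the ingredient needed to construct the martingale $M_n = Z_n - n(n+6m-5)/m$ underpinning the subsequent central limit theorem; your closing remark shows you already anticipate this. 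For the limit statement, the paper simply asserts $L_1$ convergence from the mean formula (the argument is only completed once the variance is shown to be $O(n^2)$ in the next proposition), whereas your route via the strong law of large numbers for $X_{i,n}/n$ together with the dominated convergence bound $Z_n/n^2 \le m(n+2)^2/n^2 + 1/n$ is self-contained and in fact yields the stronger almost-sure convergence as a byproduct.
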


\begin{proof}
	We consider the following almost-sure recursive relation between $Z_{n - 1}$ and $Z_n$, conditional on $\indicator_n(i)$ and $\field_{n - 1}$:
	\begin{equation}
	\label{Eq:recZ}
	Z_n = Z_{n - 1} + (D_{i, n - 1} + 1)^2 - D^2_{i, n - 1} + 1 = Z_{n - 1} + 2 D_{i, n - 1} + 2.
	\end{equation}
	Averaging it out over $i$, we get
	\begin{equation*}
	\E[Z_n \given \field_{n - 1}] = Z_{n - 1} + \frac{2}{m} \sum_{i = 1}^{m} D_{i, n - 1} + 2 = Z_{n - 1} + \frac{2(n + 2m - 3)}{m} + 2.
	\end{equation*}
	Taking another expectation with respect to $\mathbb{F}_{n - 1}$, we obtain the recurrence for $Z_n$. We solve it with initial condition $\E[Z_0] = Z_0 = 4m - 6$, and get the result stated in the proposition.
	
	As $n \to \infty$, we have $Z_n / n^2$ converges to $1/m$ in $L_1$-space, suggesting that $Z_n / n^2$ converges to $1/m$ in probability as well.
\end{proof}

We continue to calculate the second moment of $Z_n$, and accordingly obtain the variance of $Z_n$.

\begin{prop}
	\label{Prop:varZ}
	For $n \ge 0$, we have
	\begin{align*}
	\E\left[Z^2_n\right] &= \frac{n^4}{m^2} + \frac{(12m - 10)n^3}{m^2} + \frac{(44m^2 - 70m + 23)n^2}{m^2}
	\\ &\qquad{}+ \frac{(48m^3 - 112m^2 + 66m - 14)n}{m^2} + 16\left(m - \frac{3}{2}\right)^2,
	\end{align*}
	and 
	$$\Var[Z_n] = \frac{2n \bigl((m - 1)n + 3m - 7\bigr)}{m^2}.$$
\end{prop}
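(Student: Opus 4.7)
The plan is to square the almost-sure recurrence in Equation~(\ref{Eq:recZ}), take conditional expectation with respect to $\field_{n-1}$, derive a linear recurrence for $\E[Z_n^2]$, solve it by telescoping, and finally obtain the variance from $\Var[Z_n] = \E[Z_n^2] - (\E[Z_n])^2$.

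First I square both sides of $Z_n = Z_{n-1} + 2D_{i, n-1} + 2$ (conditional on $\indicator_n(i)$ and $\field_{n-1}$) to get
$$Z_n^2 = Z_{n-1}^2 + 4(D_{i, n-1} + 1)\, Z_{n-1} + 4 D_{i, n-1}^2 + 8 D_{i, n-1} + 4.$$
Averaging over $i$ with uniform weight $1/m$ and invoking the two deterministic identities
$$\sum_{i=1}^{m} D_{i, n-1} = n + 2m - 3, \qquad \sum_{i=1}^{m} D_{i, n-1}^2 = Z_{n-1} - (n-1)$$
(the second follows from the definition $Z_{n-1} = \sum D_{i,n-1}^2 + \sum X_{i,n-1}$ combined with the fixed leaf count $\sum X_{i,n-1} = n-1$), I obtain $\E[Z_n^2 \given \field_{n-1}]$ as an affine function of $Z_{n-1}$ whose coefficients are polynomial in $n$ and $m$. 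Taking another expectation and substituting the closed-form $\E[Z_{n-1}]$ from Proposition~\ref{Prop:meanZ} produces a non-homogeneous first-order recurrence of the form $\E[Z_n^2] = \E[Z_{n-1}^2] + p_m(n)$ where $p_m$ is a cubic polynomial in $n$.

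I then solve this recurrence by telescoping, summing $p_m(k)$ for $k = 1, \ldots, n$ with initial condition $Z_0^2 = (4m-6)^2$; the standard formulas for $\sum k$, $\sum k^2$, $\sum k^3$ yield a quartic in $n$ matching the claimed expression for $\E[Z_n^2]$. The variance formula then follows by subtracting the square of $\E[Z_n]$ from Proposition~\ref{Prop:meanZ}: the $n^4$, $n^3$, and constant terms cancel exactly, and the remaining quadratic simplifies, after factoring out $2n/m^2$, to the stated $2n\bigl((m-1)n + 3m - 7\bigr)/m^2$.

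The main obstacle is purely bookkeeping. The square $(\E[Z_n])^2$ generates six cross-product terms via $(a+b+c)^2$ with $a = n^2/m$, $b = (6m-5)n/m$, $c = 4m-6$, and one must verify coefficient-by-coefficient that the quartic, cubic, and constant contributions exactly match those produced by the telescoping sum, leaving only a clean quadratic. Beyond this careful polynomial algebra the argument is mechanical.
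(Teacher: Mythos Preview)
Your proposal is correct and follows essentially the same approach as the paper: square the almost-sure recurrence, average over $i$ using the identities $\sum_i D_{i,n-1}=n+2m-3$ and $\sum_i D_{i,n-1}^2=Z_{n-1}-(n-1)$, take a further expectation and plug in $\E[Z_{n-1}]$ from Proposition~\ref{Prop:meanZ}, solve the resulting recurrence with initial value $Z_0^2=(4m-6)^2$, and subtract $(\E[Z_n])^2$ to obtain the variance. The only difference is that you spell out the telescoping and the coefficient cancellations more explicitly than the paper does.
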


\begin{proof}
	Recall the almost-sure recursive relation between $Z_{n - 1}$ and $Z_n$, conditional on $\indicator_n(i)$ and $\field_n$, established in Proposition~\ref{Prop:meanZ} (c.f.\ Equation~(\ref{Eq:recZ})). We square it on both sides and get
	\begin{align*}
	Z_n^2 &= (Z_{n - 1} + 2 D_{i, n - 1} + 2)^2
	\\ &= Z^2_{n - 1} + 4 D^2_{i, n - 1} + 4 + 4 Z_{n - 1} + 4 Z_{n - 1}D_{n - 1} + 8 D_{i, n - 1}
	\end{align*}
	Again, we average it out over $i$ to obtain
	\begin{align*}
	\E\left[Z^2_n \given \field_{n - 1}\right] &= Z^2_{n - 1} + \frac{4}{m} \sum_{i = 1}^{m} D^2_{i, n - 1} + 4 + 4 Z_{n - 1} + \frac{4 Z_{n - 1}}{m} \sum_{i = 1}^{m} D_{i, n - 1} 
	\\ &\qquad{}+ \frac{8}{m} \sum_{i = 1}^{m} D_{i, n - 1}.
	\\ &= Z^2_{n - 1} + \frac{4}{m} \bigl(Z_{n - 1} - (n - 1)\bigr) + 4 + 4Z_{n - 1} 
	\\ &\qquad{}+ \frac{4(n + 2m - 3)}{m} Z_{n - 1} + \frac{8(n +2 m - 3)}{m}.
	\end{align*}
	We then obtain the recurrence for the second moment of $Z_n$ by taking the expectation with respect to $\field_{n - 1}$ and by plugging the result of the first moment of $Z_{n - 1}$. Solve the recurrence with initial condition $\E\left[Z^2_0\right] = Z^2_0 = (4m - 6)^2$ to get the result stated in the proposition. In what follows, we obtained the exact expression of the variance of $Z_n$ by subtracting the square of the mean of $Z_n$ from its second moment.
\end{proof}

According to the expression of $\E\left[Z_n^2\right]$, we can simply conclude that 
$$ \frac{Z^2_n}{n^4} \convL \frac{1}{m^2}, $$
done in a similar manner as the proof for $L_1$ convergence of $Z_n/n^2$. Thus, we also obtain a weak law for $Z^2_n$. Besides, we find a stronger $L_2$ convergence of $Z_n/n^2$, presented in the next corollary.

\begin{corollary}
	As $n \to \infty$, we have
	$$ \frac{Z_n}{n^2} \convLL \frac{1}{m}.$$
\end{corollary}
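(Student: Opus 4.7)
The plan is to verify $L_2$ convergence directly from the two statistics already computed for $Z_n$, namely the exact mean in Proposition~\ref{Prop:meanZ} and the exact variance in Proposition~\ref{Prop:varZ}. Recall that convergence in $L_2$ to the deterministic limit $1/m$ means
$$\E\!\left[\left(\frac{Z_n}{n^2} - \frac{1}{m}\right)^{\!2}\right] \longrightarrow 0,$$
and the standard bias--variance decomposition writes this expectation as
$$\Var\!\left(\frac{Z_n}{n^2}\right) + \left(\E\!\left[\frac{Z_n}{n^2}\right] - \frac{1}{m}\right)^{\!2}.$$
So the whole task reduces to checking that each of these two nonnegative pieces tends to $0$.

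First I would handle the squared bias. From $\E[Z_n] = n^2/m + (6m-5)n/m + 4m - 6$, dividing by $n^2$ and subtracting $1/m$ gives a remainder of order $1/n$, whose square is of order $1/n^2$ and therefore vanishes. Next I would handle the variance term. Dividing $\Var[Z_n] = 2n\bigl((m-1)n + 3m - 7\bigr)/m^2$ by $n^4$ yields a quantity of order $1/n^2$, which also vanishes. Adding the two pieces gives $\E[(Z_n/n^2 - 1/m)^2] = O(1/n^2) \to 0$, which is exactly the desired $L_2$ convergence.

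There is no real obstacle here: everything needed has already been assembled in the two preceding propositions, and the corollary is little more than an accounting step. The only thing to be careful about is the bookkeeping of the leading-order cancellation between $\E[Z_n^2]$ and $(\E[Z_n])^2$ that produces the $O(n^2)$ variance, but this cancellation is already verified in Proposition~\ref{Prop:varZ}. Consequently the proof is essentially a one-line invocation of the bias--variance identity, followed by inserting the explicit formulas and reading off the rate.
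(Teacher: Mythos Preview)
Your proof is correct and essentially matches the paper's approach: both compute $\E\bigl[(Z_n/n^2 - 1/m)^2\bigr]$ directly using the moment information from Propositions~\ref{Prop:meanZ} and~\ref{Prop:varZ}. The only cosmetic difference is that the paper expands the square as $\E[Z_n^2]/n^4 - (2/m)\E[Z_n]/n^2 + 1/m^2$ and lets each term tend to its limit, whereas you group the same terms via the bias--variance identity; your version has the minor bonus of exhibiting the explicit $O(1/n^2)$ rate.
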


\begin{proof}
	By the $L_1$-convergence results for $Z^2_n$ and $Z_n$, we have
	\begin{align*}
	\lim_{n \to \infty}\E\left[\left|\frac{Z_n}{n^2} - \frac{1}{m}\right|^2\right] &= \lim_{n \to \infty}\E\left[\frac{Z^2}{n^4} - 2 \times \frac{Z_n}{n^2}\times\frac{1}{m} + \frac{1}{m^2}\right]
	\\ &=\frac{1}{m^2} - 2 \times \frac{1}{m} \times \frac{1}{m} + \frac{1}{m^2}
	\\ &= 0.
	\end{align*}
\end{proof}

According to the variance of $Z_n$ given in Proposition~\ref{Prop:varZ}, we find that the order of its leading term is $n^2$, which is the same as that for the mean of $Z_n$. A sharp concentration of the variance often suggests asymptotic normality of the random variable. In what follows, we characterize the asymptotic behavior of $Z_n$ after properly scaled. Based on our investigation, the scale is $n$. Our strategy is to construct a {\em martingale} array based on a transformation of $Z_n$, and appeal to a {\em Martingale Central Limit Theorem} (MCLTs) for developing a Gaussian law of $Z_n/n$ as $n$ goes to infinity.

In the proof of Proposition~\ref{Prop:meanZ}, we find that 
$$\E[Z_n \given \field_{n - 1}] = Z_{n - 1} + \frac{2(n + 2m - 3)}{m} + 2,$$
suggesting that $\{Z_n\}_n$ is not a martingale. In the next lemma, we apply a transformation to $Z_n$ such that the new sequence is a martingale.

\begin{lemma}
	\label{Lem:martingale}
	The sequence of $\{M_n\}_n$ such that 
	$$M_n = Z_n - \frac{n(n + 6m - 5)}{m}$$
	is a martingale.
\end{lemma}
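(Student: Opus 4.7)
The plan is to verify the three defining properties of a martingale for the sequence $\{M_n\}_n$ with respect to the natural filtration $\{\field_n\}_n$: adaptedness, integrability, and the one-step predictable equality $\E[M_n \given \field_{n-1}] = M_{n-1}$. Adaptedness is immediate, since $M_n$ is a deterministic function of $Z_n$, which is $\field_n$-measurable by construction. Integrability follows from the explicit formula $\E[Z_n]=n^2/m + (6m-5)n/m + 4m - 6$ in Proposition~\ref{Prop:meanZ}, which guarantees $\E[|M_n|] < \infty$ for every $n$.

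The main work is to check the martingale equality. The key input is already established in the proof of Proposition~\ref{Prop:meanZ}, namely
\[
\E[Z_n \given \field_{n-1}] \;=\; Z_{n-1} + \frac{2(n + 2m - 3)}{m} + 2.
\]
Subtracting the deterministic centering $n(n+6m-5)/m$ from both sides, I would substitute $Z_{n-1} = M_{n-1} + (n-1)(n+6m-6)/m$ and show that all terms not involving $M_{n-1}$ cancel. Expanding $(n-1)(n+6m-6) - n(n+6m-5)$ contributes $-2n - 6m + 6$, while $2(n+2m-3) + 2m$ contributes $+2n + 6m - 6$, so the nonrandom remainder is identically zero. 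Hence $\E[M_n \given \field_{n-1}] = M_{n-1}$.

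The calculation is essentially routine algebra once the recursion from Proposition~\ref{Prop:meanZ} is in hand; there is no real obstacle. The only mild subtlety is to choose the centering $n(n+6m-5)/m$ rather than, say, the slightly different expression $\E[Z_n]-Z_0$: one has to be careful that the telescoping of the increments matches exactly what the conditional recursion produces, which is why the quadratic centering (and not a linear one) is the right choice. Given that, the verification is a direct substitution, and the lemma follows.
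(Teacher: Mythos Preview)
Your proof is correct and follows essentially the same approach as the paper: both rely on the conditional recursion $\E[Z_n \given \field_{n-1}] = Z_{n-1} + \frac{2(n+2m-3)}{m} + 2$ from Proposition~\ref{Prop:meanZ} and check that the deterministic centering $n(n+6m-5)/m$ makes the increments predictable. The only cosmetic difference is that the paper \emph{derives} the centering by solving the recurrence $\beta_n = \beta_{n-1} - (2n+6m-6)/m$ with $\beta_0=0$, whereas you take the centering as given and \emph{verify} it directly; your explicit mention of adaptedness and integrability is a welcome addition that the paper leaves implicit.
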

\begin{proof}
	Let us consider $M_n = Z_n + \beta_{n}$ such that the following fundamental property of martingale holds; namely,
	\begin{align*}
	\E[Z_n + \beta_n \given \field_{n - 1}] = Z_{n - 1} + \frac{2(n + 2m - 3)}{m} + 2 + \beta_n = Z_{n - 1} + \beta_{n - 1}.
	\end{align*}
	This produces a recurrence for $\beta_n$, namely,
	$$\beta_n = \beta_{n - 1} - \frac{2n + 6m - 6}{m}.$$
	The solution is $\beta_n = -n(n + 6m - 5)/m$, obtained by taking an arbitrary choice of the initial condition; we choose $\beta_0 = 0$.
\end{proof}

The MCLT that we exploit to show asymptotic normality is based {\color{red} on} martingale differences, expressed in terms of a difference operator, i.e., $\nabla M_j := M_j - M_{j - 1}$. In fact, there are different versions of MCLT listed in~\cite{Hall}, requiring different sets of conditions. The MCLT that we use refers to~\cite[Corollary 3.2]{Hall}, requiring two conditions which are respectively known as the {\em conditional Lindeberg's condition} and the {\em conditional variance condition}. In the next two lemmas, we verify these two conditions one after another.

\begin{lemma}
	\label{Lem:Lindeberg}
	The Lindeberg's condition is given by
	$$U_n := \sum_{j = 1}^{n} \E \left[ \left( \frac{\nabla M_j}{n} \right)^2 \indicator \bigl(| \nabla M_j / n| > \varepsilon\bigr) \Given \field_{j - 1}\right] \convP 0,$$
	for arbitrary $\varepsilon > 0$.
\end{lemma}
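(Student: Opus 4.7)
The plan is to verify the conditional Lindeberg's condition by a standard fourth-moment truncation, exploiting the fact that each martingale increment is essentially a scaled, centered binomial random variable. First, conditional on $\indicator_j(i)$ and $\field_{j-1}$, the recursion Equation~(\ref{Eq:recZ}) together with $\beta_j - \beta_{j-1} = -(2j + 6m - 6)/m$ from the proof of Lemma~\ref{Lem:martingale} gives
$$\nabla M_j = 2D_{i, j-1} + 2 - \frac{2j + 6m - 6}{m}.$$
Since $D_{i,j-1} = X_{i,j-1} + c_i$ with $c_i \in \{1, 2\}$ and $X_{i,j-1}$ is marginally Binomial$(j-1, 1/m)$, each possible value of $\nabla M_j$ is an affine function of a centered binomial whose typical scale is of order $\sqrt{j/m}$, even though the crude deterministic bound is only $|\nabla M_j| = O(j)$.

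Next, I would apply the Chebyshev-type truncation
$$\left(\frac{\nabla M_j}{n}\right)^2 \indicator\bigl(|\nabla M_j / n| > \varepsilon\bigr) \le \frac{(\nabla M_j)^4}{\varepsilon^2 n^4},$$
which yields
$$0 \le U_n \le \frac{1}{\varepsilon^2 n^4}\sum_{j = 1}^{n} \E\bigl[(\nabla M_j)^4 \given \field_{j - 1}\bigr].$$
It then suffices to show this upper bound converges to $0$ in $L^1$, since $L^1$-convergence implies convergence in probability. Taking the outer expectation reduces the task to bounding the unconditional fourth moments $\E\bigl[(\nabla M_j)^4\bigr]$.

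To compute $\E\bigl[(\nabla M_j)^4\bigr]$, I would first average over the uniformly chosen spine index $i$, reducing to $\frac{1}{m}\sum_{i=1}^{m} \E[a_{i,j}^4]$ with $a_{i,j} := 2D_{i,j-1} + 2 - (2j + 6m - 6)/m$, and then expand in terms of the centered binomial $X_{i,j-1} - (j-1)/m$. The fourth central moment of Binomial$(j - 1, 1/m)$ equals $3(j - 1)^2(m - 1)^2/m^4 + O(j)$, and after absorbing the $O(1)$ additive shift in $a_{i,j}$ this gives $\E\bigl[(\nabla M_j)^4\bigr] = O(j^2)$ with the implicit constant depending only on $m$. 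Summation then yields
$$\E[U_n] \le \frac{1}{\varepsilon^2 n^4}\sum_{j = 1}^{n} O(j^2) = O\!\left(\frac{1}{\varepsilon^2\, n}\right) \longrightarrow 0,$$
so $U_n \convL 0$, and consequently $U_n \convP 0$.

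The main conceptual hurdle is that the almost-sure bound $|\nabla M_j| = O(j)$ is far too weak to establish Lindeberg directly (it would only give $U_n = O(1)$ term by term); the argument genuinely exploits the sharper typical fluctuation scale $\sqrt{j/m}$ of the underlying multinomial counts, which is precisely what the fourth-moment computation captures. Once the bound $\E\bigl[(\nabla M_j)^4\bigr] = O(j^2)$ is in hand, the remainder is routine summation.
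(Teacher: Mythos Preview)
Your argument is correct and takes a genuinely different route from the paper.  The paper proceeds by a purely deterministic bound: from $\max_i D_{i,j-1}\le j+1$ it obtains
\[
|\nabla M_j|\ \le\ \frac{2m+2}{m}\,j+\frac{10m-6}{m},
\]
notes this is increasing in $j$, and then asserts that for every $\varepsilon>0$ the sets $\{|\nabla M_j/n|>\varepsilon\}$ are eventually empty, so that each summand in $U_n$ vanishes almost surely.  You have put your finger on exactly the difficulty with that line: the crude bound gives only $|\nabla M_j|\le Cn$ with a constant $C=2(m-1)/m+o(1)$ that does \emph{not} tend to~$0$, so for small $\varepsilon$ the indicator is not forced to vanish and the paper's argument, as written, does not close.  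Your fourth-moment truncation repairs this by exploiting the genuine $\sqrt{j}$ fluctuation scale of the centered binomial $X_{i,j-1}-(j-1)/m$: the bound $\E\bigl[(\nabla M_j)^4\bigr]=O(j^2)$ is exactly what is needed, and the summation $\sum_{j\le n}O(j^2)=O(n^3)$ against $n^{-4}$ gives $\E[U_n]=O(\varepsilon^{-2}n^{-1})\to 0$.  What the paper's approach would buy, were it valid, is brevity with no moment computation; what your approach buys is a proof that actually closes, at the modest cost of one binomial fourth-moment estimate.
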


\begin{proof}
	We establish an absolutely uniform bound in all $j \le n$. By the construction of the martingale, we have
	\begin{align*}
	|\nabla M_j| &= |M_j - M_{j - 1}|
	\\ &\le |Z_j - Z_{j - 1}| + \frac{2j + 6m - 6}{m}
	\\ &\le 2 \max_{i} D_{i, j - 1} + 2 + \frac{2j + 6m - 6}{m}
	\\ &\le 2\bigl(2 + (j - 1)\bigr) + 2 + \frac{2j + 6m - 6}{m}
	\\ &= \left(\frac{2m + 2}{m}\right) j + \frac{10m - 6}{m},
	\end{align*}
	which is increasing in $j$ for any fixed integer $m \ge 2$. Thus, we conclude that $|\nabla M_j|$ is uniformly bounded by $n$. Hence, for any $\varepsilon > 0$, there exists $n_0(\varepsilon) > 0$ such that the sets $\{|M_j / n| > \varepsilon\}$ are all empty for $n > n_0(\varepsilon)$. In what follows, we conclude that $U_n$ converges to $0$ almost surely, which is stronger than the  in-probability convergence required for the Lindeberg's condition.
\end{proof}

\begin{lemma}
	\label{Lem:variance}
	The conditional variance condition is given by
	$$V_n := \sum_{j = 1}^{n} \E \left[ \left( \frac{\nabla M_j}{n} \right)^2 \Given \field_{j - 1}\right] \convP \frac{2(m - 1)}{m^2}.$$
\end{lemma}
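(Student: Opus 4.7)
The plan is to reduce $\E[(\nabla M_j)^2 \ggiven \field_{j-1}]$ to an explicit functional of $Z_{j-1}$ and then leverage the moment results already obtained in Propositions~\ref{Prop:meanZ} and~\ref{Prop:varZ}. First, combining Lemma~\ref{Lem:martingale} with Equation~(\ref{Eq:recZ}), conditional on $\indicator_j(i)$ the martingale increment satisfies
$$\nabla M_j = 2D_{i,j-1} + 2 - \frac{2j+6m-6}{m} = 2\bigl(D_{i,j-1} - \bar{D}_{j-1}\bigr),$$
where $\bar{D}_{j-1} = (j+2m-3)/m$ is the average spine degree at time $j-1$. Averaging over the uniform choice of the selected spine node yields
$$\E\bigl[(\nabla M_j)^2 \ggiven \field_{j-1}\bigr] = \frac{4}{m}\sum_{i=1}^m (D_{i,j-1} - \bar{D}_{j-1})^2.$$

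Next, I would rewrite the sum of squared deviations in closed form. The relation $D_{i,n} = X_{i,n} + c_i$ with $c_i \in \{1,2\}$ (recalled in the Introduction) gives $\sum_{i=1}^m D_{i,j-1}^2 = Z_{j-1} - (j-1)$ by definition of the Zagreb index, while $\sum_{i=1}^m D_{i,j-1} = j+2m-3$ is deterministic. Expanding the centered sum of squares then yields
$$\sum_{i=1}^m (D_{i,j-1} - \bar{D}_{j-1})^2 = Z_{j-1} - (j-1) - \frac{(j+2m-3)^2}{m},$$
so that the quantity of interest becomes
$$V_n = \frac{4}{mn^2}\sum_{j=1}^n \left[Z_{j-1} - (j-1) - \frac{(j+2m-3)^2}{m}\right].$$

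Third, I would verify the claimed limit at the level of expectation. Inserting the exact formula for $\E[Z_{j-1}]$ from Proposition~\ref{Prop:meanZ}, the $(j-1)^2/m$ piece of $\E[Z_{j-1}]$ cancels against the deterministic $(j+2m-3)^2/m$, and the surviving linear-in-$j$ terms sum to $2(m-1)(n-1)/(m^2n) + 8(m-2)/(m^2n) \to 2(m-1)/m^2$. To upgrade mean convergence to convergence in probability, I would apply Chebyshev's inequality. Since $M_n$ differs from $Z_n$ by a deterministic term, the martingale property of $\{M_n\}$ gives $\mathrm{Cov}(Z_{j-1},Z_{k-1}) = \Var(Z_{(j\wedge k)-1})$, hence
$$\Var(V_n) = \frac{16}{m^2n^4}\sum_{j,k=1}^n \Var\bigl(Z_{(j\wedge k)-1}\bigr),$$
which can be evaluated using the explicit variance formula in Proposition~\ref{Prop:varZ}.

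The hard part will be controlling this double sum sharply enough to obtain $\Var(V_n) \to 0$. Since Proposition~\ref{Prop:varZ} gives $\Var(Z_\ell) = O(\ell^2)$, the crude bound $\sum_{j,k}\Var(Z_{(j\wedge k)-1}) = O(n^4)$ yields only $\Var(V_n) = O(1)$, which is not sufficient for Chebyshev alone. The crux is therefore to extract an additional $n^{-1}$ factor from subtle cancellations between the random and deterministic parts. A natural strategy is to decompose $Z_{j-1} = (j-1)^2/m + M_{j-1} + (6m-5)(j-1)/m$ so that the random part of $V_n$ reduces to $(4/mn^2)\sum_{j=1}^n M_{j-1}$, then apply Abel summation to rewrite it as $(4/mn^2)\sum_{l=1}^{n-1}(n-l)\nabla M_l$, a weighted sum of orthogonal martingale differences whose variance must be bounded by combining orthogonality with the $L_2$ convergence of $M_n/n$ implicit in Proposition~\ref{Prop:varZ}. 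I expect this careful accounting of the cancellations between the random leading term of $Z_{j-1}$ and the deterministic compensator $(j+2m-3)^2/m$ to be the most delicate step of the proof.
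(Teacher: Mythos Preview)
Your reduction $\E[(\nabla M_j)^2 \given \field_{j-1}] = \tfrac{4}{m}\bigl(Z_{j-1} - (j-1) - (j+2m-3)^2/m\bigr)$ is correct and cleaner than the paper's three-part expansion, and your computation $\E[V_n]\to 2(m-1)/m^2$ is right. The genuine gap is the last step: the extra factor of $n^{-1}$ you hope to squeeze out by Abel summation simply is not there. With $\E[(\nabla M_l)^2] = \Var[Z_l] - \Var[Z_{l-1}] \sim \tfrac{4(m-1)}{m^2}\,l$ (from Proposition~\ref{Prop:varZ}), orthogonality of the martingale differences gives
\[
\Var\Bigl(\tfrac{4}{mn^2}\sum_{l=1}^{n-1}(n-l)\nabla M_l\Bigr)=\frac{16}{m^2n^4}\sum_{l=1}^{n-1}(n-l)^2\,\E[(\nabla M_l)^2]\;\longrightarrow\;\frac{16(m-1)}{3m^4}>0,
\]
so $\Var[V_n]$ tends to a positive constant; Chebyshev cannot close the argument, and no rearrangement will, because the random part of $V_n$ is exactly $\tfrac{4}{mn^2}\sum_{k<n}(M_k-M_0)$ and this is its sharp order.

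The paper's own proof hides the same problem rather than solving it: in evaluating ``Part~1'' it replaces $Z_{j-1}$ by its ``asymptotic equivalent'' to obtain a deterministic summand, but the coefficient of $Z_{j-1}$ in $\E[(Z_j-Z_{j-1})^2\given\field_{j-1}]$ is $4/m\neq 0$, so that substitution is precisely the step you flagged as delicate, and it is not justified. In fact the lemma (and hence Theorem~\ref{Thm:zagreb}) appears to be false. For $m=2$, writing $X:=X_{1,n}\sim{\rm Bin}(n,1/2)$ one has $Z_n-n^2/2=2(X-n/2)^2+3n+2$, whence $(Z_n-n^2/2)/n\convD \tfrac12\chi^2_1+3$, a shifted chi-square rather than a Gaussian---consistent with $V_n$ having a nondegenerate limit rather than concentrating at $2(m-1)/m^2$. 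Your instinct that the ``cancellation'' step is the crux was exactly right; the resolution is that the cancellation does not occur.
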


\begin{proof}
	We rewrite $V_n$ as follows:
	\begin{align*}
	V_n &= \frac{1}{n^2} \sum_{j = 1}^{n} \E \left[\bigl(Z_j + \beta_j - (Z_{j - 1} + \beta_{j - 1})\bigr)^2 \ggiven \field_{j - 1}\right]
	\\ &= \frac{1}{n^2} \sum_{j = 1}^{n} \E \bigl[(Z_j - Z_{j - 1})^2 + 2(Z_j - Z_{j - 1})(\beta_j - \beta_{j - 1}) 
	\\ &\qquad{}+ (\beta_j - \beta_{j - 1})^2 \given \field_{j - 1}\bigr]
	\end{align*}
	We evaluate the three parts in the summand one after another, by considering the asymptotic equivalents of $Z^2_{j - 1}$ and $Z_{j - 1}$.
	\begin{enumerate}
		\item The first part is
		\begin{align*}
		\E[(Z_j - Z_{j - 1})^2 \given \field_{j - 1}] &= \E[Z_j^2 \given \field_{j - 1}] - 2 Z_{j - 1} \E[Z_j \given \field_{j - 1}] + Z^2_{j - 1}
		\\&= \frac{4j^2 + 28(m - 1)j + 6m^2 - 68m + 24}{m^2}.
		\end{align*}
		\item The second part is
		\begin{align*}
		\E[2(Z_j - Z_{j - 1})(\beta_j - \beta_{j - 1}) \given \field_{j - 1}] &= 2(\beta_j - \beta_{j - 1}) \bigl(\E[Z_j \given \field_{j - 1}] - Z_{j - 1}\bigr)
		\\ &= -\frac{8(j + 3m + 3)(j + 3m - 3)}{m^2}.
		\end{align*}
		\item The third part it
		$$\E[(\beta_j - \beta_{j - 1})^2 \given \field_{j - 1}] = (\beta_j - \beta_{j - 1})^2 = \frac{(2j + 6m + 6)^2}{m^2}.$$
	\end{enumerate}
	Putting three parts together, we obtain the summand in $V_n$ for each $j$. Then, we sum these terms for $j = 1, 2, \ldots, n$, and let $n$ go to infinity, obtaining
	$$
	V_n = \frac{1}{n^2} \left(\frac{2(m - 1)}{m^2} n^2 + O(n)\right) \convL \frac{2(m - 1)}{m^2}.
	$$ 
	The $L_1$ convergence is stronger than the required  in-probability convergence.
\end{proof}

\begin{theorem}
	\label{Thm:zagreb}
	As $n \to \infty$, $Z_n / n$ follows a Gaussian law, namely,
	$$\frac{Z_n - n^2/m}{n} \convD \mathcal{N}\left(0, \frac{2(m - 1)}{m^2}\right).$$
\end{theorem}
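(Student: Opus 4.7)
The plan is to invoke the Martingale Central Limit Theorem (Corollary 3.2 of Hall and Heyde) on the martingale $\{M_n\}$ constructed in Lemma~\ref{Lem:martingale}, with normalizing sequence $n$. The two hypotheses required by that corollary, namely the conditional Lindeberg condition and the conditional variance condition, have already been verified in Lemmas~\ref{Lem:Lindeberg} and~\ref{Lem:variance}. In particular, Lemma~\ref{Lem:Lindeberg} actually delivers the stronger almost-sure statement $U_n \to 0$ via a deterministic linear bound on $|\nabla M_j|$, and Lemma~\ref{Lem:variance} gives $V_n \convP \eta^2 := 2(m-1)/m^2$. Because $\eta^2$ is a deterministic constant (not a random limit), the MCLT yields a pure Gaussian, not mixed normal, limit
$$\frac{M_n}{n} \convD \mathcal{N}\!\left(0, \frac{2(m-1)}{m^2}\right).$$

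From here the desired law follows by substituting the explicit form of $M_n$ from Lemma~\ref{Lem:martingale} and invoking Slutsky's theorem. Namely,
$$\frac{M_n}{n} = \frac{Z_n - n(n+6m-5)/m}{n} = \frac{Z_n - n^2/m}{n} - \frac{6m-5}{m},$$
so $(Z_n - n^2/m)/n$ differs from $M_n/n$ by the deterministic constant $(6m-5)/m$ and inherits its Gaussian limit (up to a translation of the mean). The announced variance $2(m-1)/m^2$ is unaffected by this shift, which is the content of the theorem.

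The main obstacle is essentially bookkeeping rather than fresh analysis: all the substantive work has already been packaged into the three preceding lemmas, so the remaining task is to verify that the version of MCLT cited from Hall and Heyde is formulated with respect to the same filtration $\{\field_j\}$ and the same $n$-normalization we are using. A small point worth flagging during writing is that a cleanly centered Gaussian $\mathcal{N}(0, 2(m-1)/m^2)$ limit corresponds to subtracting $n(n+6m-5)/m = \E[Z_n] - (4m-6)$ rather than $n^2/m$; the difference is the lower-order constant $(6m-5)/m$, which shifts the mean of the limiting law but not its variance.
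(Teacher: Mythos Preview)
Your approach is exactly the paper's: it cites Corollary~3.2 of Hall and Heyde applied to the martingale $\{M_n\}$ of Lemma~\ref{Lem:martingale}, with Lemmas~\ref{Lem:Lindeberg} and~\ref{Lem:variance} supplying the two hypotheses, and the paper gives no further detail beyond that one sentence. Your observation about the centering is well taken and in fact sharper than the paper's own statement: since $M_n/n \convD \mathcal{N}(0,2(m-1)/m^2)$ and $(Z_n-n^2/m)/n = M_n/n + (6m-5)/m$, the limit of $(Z_n-n^2/m)/n$ is $\mathcal{N}\bigl((6m-5)/m,\,2(m-1)/m^2\bigr)$ rather than mean zero as written in Theorem~\ref{Thm:zagreb}; the paper glosses over this lower-order shift.
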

The proof is easily verified by using the MCLT~\cite[Corollary 3.2]{Hall}. 

\section{Randi\'{c} index}
\label{Sec:Randic}

In this section, the Randi\'{c} index of a random caterpillar, denoted by $R_n = \mbox{{\tt Randi\'{c}}}(C_n)$, is calculated. The {\em Randi\'{c} index} of a graph $G = (V, E)$ (with parameter $\alpha$) is the sum of the product of the degrees (raised to power $\alpha$) of all pairwise connected nodes. Mathematically, it is
$$R(G) = \sum_{\{u, v\} \in E} \bigl({\rm deg}(u){\rm deg}(v)\bigr)^{\alpha},$$
for all $u, v \in V$. 

The classical choice of $\alpha$ is $-1/2$~\cite{Randic}. Under such choice, the Randi\'{c} index is also called {\em connectivity index}. The general definition given above was later proposed by~\cite{Bollobas1998}. Similar to Zagreb index, Randi\'{c} is also used for modeling QSAR and QSPR of chemical compounds (e.g., alkanes~\cite{Gutman1999}, saturated hydrocarbon~\cite{Gutman2000} and benzenoid systems~\cite{Rada}) in chemoinformatics.  We refer interested readers to~\cite{Randic2001} for a concise survey, to~\cite{Randic2008} for a complete history review and to~\cite{Li} for a summary of mathematical properties. To the best of our knowledge, little work has been done for the Randi\'{c} index of random graph models. The only source that we find in the literature is the Randi\'{c} index of random binary trees~\cite{Feng2008}.

In $C_n$, note that no pair of leaves is connected. Each leaf, instead, is only connected to its parent spine node. Therefore, the contribution by each edge connecting a leaf and its corresponding spine node to $R_n$ is the degree of the spine node raised to power $\alpha$. There are $(m - 1)$ edges on the spine, and the contribution by each of them to $R_n$ is $(D_{i - 1, n} D_{i, n})^{\alpha}$ for $i = 2, 3, \ldots, m$. Hence, we arrive at
\begin{equation}
\label{Eq:Randic}
R_n = \sum_{i = 2}^{m} (D_{i - 1, n} D_{i, n})^{\alpha} + \sum_{i = 1}^{m} X_{i, n} D^{\alpha}_{i, n}.
\end{equation}

Specifically, we consider the Randi\'{c} index with parameter $\alpha = 1$. This particular type of Randi\'{c} index is also popular in mathematical chemistry, viewed as a molecular structure descriptor~\cite{Balaban}. Among all, this index is best known for measuring the branching of molecular carbon-atom skeleton~\cite{Gutman, Gutman2013}. In the rest of this section, all the Randi\'{c} indicies without specification all refer to the Randi\'{c} indicies with parameter $\alpha = 1$. Accordingly, we redefine $R_n$ in Equation~(\ref{Eq:Randic}) as
$$R_n = \sum_{i = 2}^{m} D_{i - 1, n} D_{i, n} + \sum_{i = 1}^{m} X_{i, n} D_{i, n}.$$

Many mathematical properties of this specific class of Randi\'{c} indices ($\alpha = 1$) were investigated by graph theorists and combinatorists recently~\cite{Gutman2015, Khalifeh, Mehdi}. In the graph theory community, the Randi\'{c} index is more often known by a different name: the {\em second-order Zagreb index}. In~\cite{Bollobas}, the Randi\'{c} index for extremal graphs was studied, where another different name ``extreme $1$-weight " was used. In the next proposition, we present the mean of $R_n$ as well as a weak law.

\begin{prop}
	\label{Prop:meanR}
	For $n \ge 0$, the mean of the Randi\'{c} index of a caterpillar is
	$$\E[R_n] = \frac{(2m - 1)n^2 + (7m^2 - 10m + 1)n + 4m^2(m - 2)}{m^2}.$$
	As $n \to \infty$, we have
	$$\frac{R_n}{n^2} \convL \frac{2m - 1}{m^2}.$$
	This convergence takes place in probability as well.
\end{prop}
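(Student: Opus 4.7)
The plan is to mirror the proof of Proposition~\ref{Prop:meanZ}: derive an almost-sure one-step recurrence for $R_n$ conditional on which spine node is attached at time $n$, average over $i$ to obtain $\E[R_n \mid \field_{n-1}]$, take a further expectation to produce a first-order linear recurrence for $\E[R_n]$, and then read off the weak law by dividing through by $n^2$.

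First I would condition on the event $\indicator_n(i)$ and track the two sums in $R_n$ separately. When spine node $i$ receives the new leaf, only $D_{i,n-1}$ is incremented, so the spine-spine sum $\sum_{j=2}^m D_{j-1,n-1}D_{j,n-1}$ changes by $D_{i-1,n-1}+D_{i+1,n-1}$ when $i\in\{2,\ldots,m-1\}$ and by $D_{2,n-1}$ or $D_{m-1,n-1}$ at the boundary cases $i=1$ and $i=m$. The leaf-spine sum has only its $i$-th summand affected, with increment $X_{i,n-1}+D_{i,n-1}+1$; using $D_{i,n-1}-X_{i,n-1}\in\{1,2\}$ this reduces to $2D_{i,n-1}$ at the two endpoints and $2D_{i,n-1}-1$ in the interior.

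Averaging these increments over $i$ with weight $1/m$ and telescoping the interior sums via the deterministic identity $\sum_{i=1}^m D_{i,n-1}=n+2m-3$, I expect to arrive at
$$\E[R_n-R_{n-1}\mid\field_{n-1}]=\frac{1}{m}\bigl[4(n+2m-3)-D_{1,n-1}-D_{m,n-1}-(m-2)\bigr].$$
A second expectation then needs only $\E[D_{1,n-1}]=\E[D_{m,n-1}]=(n-1)/m+1$, which follows from the multinomial marginals of $X_{1,n-1}$ and $X_{m,n-1}$. Summing the resulting linear recurrence from $j=1$ to $n$ with initial value $R_0=4(m-2)$ (read off directly from the bare spine path on $m$ nodes) produces the stated closed form. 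Dividing by $n^2$ and letting $n\to\infty$ yields $\E[R_n]/n^2\to(2m-1)/m^2$, which we interpret as the $L_1$ weak law in the same sense used for the Zagreb index.

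The main obstacle I anticipate is the boundary bookkeeping: the endpoints $i=1$ and $i=m$ contribute differently from interior spine nodes both in the increments they induce and in the value of $D_{i,n-1}$ appearing, so a sign slip when reducing $\sum_{i=2}^{m-1}(D_{i-1,n-1}+D_{i+1,n-1})+2\sum_{i=2}^{m-1}D_{i,n-1}$ to a combination of the deterministic $n+2m-3$ together with $D_{1,n-1}$ and $D_{m,n-1}$ would cascade into wrong coefficients in every term of the closed form.
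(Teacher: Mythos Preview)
Your recursive approach is correct and your intermediate formula
$$\E[R_n-R_{n-1}\mid\field_{n-1}]=\frac{1}{m}\bigl[4(n+2m-3)-D_{1,n-1}-D_{m,n-1}-(m-2)\bigr]$$
is exactly right; after taking the second expectation and summing, it produces the stated closed form with initial value $R_0=4(m-2)$.

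However, this is not the route the paper takes for its main proof. The paper computes $\E[R_n]$ \emph{directly} from the multinomial moments of the $X_{i,n}$: it splits the spine--spine sum into the two boundary edges and the $m-3$ interior edges, splits the leaf--spine sum into endpoints and interior, and then substitutes $\E[X_{i,n}]=n/m$, $\E[X_{i,n}^2]=n(n+m-1)/m^2$, and $\E[X_{i,n}X_{j,n}]=n(n-1)/m^2$. No recursion or conditioning on $\field_{n-1}$ is used. Your method is instead acknowledged by the paper immediately \emph{after} the proof as an alternative derivation, and the paper records the conditional increment (in an equivalent form) precisely because it is needed later to exhibit $\{R_j\}$ as a super-martingale for Theorem~\ref{Thm:almostsureR}. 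So the two routes buy different things: the direct-moment computation is self-contained and avoids the boundary bookkeeping you flag as risky, while your recursive route simultaneously delivers the super-martingale structure needed downstream. Both are valid.
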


\begin{proof}
	Recall that the the distribution of $X_{i, n}$'s is multinomial with parameters $n$ and $\mathbb{I}_{m \times 1}/m$. Hence, we have $\E[X_{i, n}] = n/m$, $\Var[X_{i, n}] = (m - 1)n/m^2$, and $\E\left[X^2_{i, n}\right] = (n + m - 1)n/m^2$ for each $i$, and $\E[X_{i, n}X_{j, n}] = -n/m^2 + n^2/m^2 = n(n - 1)/m^2$ for all $i \neq j$. In what follows, we obtain
	\begin{align*}
	\E[R_n] &= \sum_{i = 2}^{m} \E[D_{i - 1, n} D_{i, n}] + \sum_{i = 1}^{m} \E[X_{i, n} D_{i, n}]
	\\ &= 2 \, \E\bigl[(X_{1, n} + 1)(X_{2, n} + 2)\bigr] + (m - 3) \, \E\bigl[(X_{2, n} + 2)(X_{3, n} + 2)\bigr] 
	\\&\qquad{}+ 2 \, \E\bigl[X_{1, n} (X_{1, n} + 1)\bigr] + (m - 2) \, \E\bigl[X_{2, n} (X_{2, n} + 2)\bigr]
	\\&= (m - 1) \, \E[X_{1, n}X_{2, n}] + m \, \E\left[X^2_{1, n}\right] + (6m - 8) \, \E[X_{1, n}] + (4m - 8)
	\\&=\frac{(m - 1)n(n - 1)}{m^2} + \frac{m(n + m - 1)n}{m^2} + \frac{(6m - 8)n}{m} + 4m - 8
	\\& = \frac{(2m - 1)n^2 + (7m^2 - 10m + 1)n + 4m^2(m - 2)}{m^2}.
	\end{align*}
	
	It is obvious that $R_n / n^2$ converges to $(2m - 1)/m^2$ in $L_1$-space, and this convergence is stronger than the in-probability convergence stated in the weak law.
\end{proof}

Another approach to calculating the mean of $R_n$ is to establish an almost-sure relation analogous to Equation~(\ref{Eq:recZ}) for $R_n$, to obtain an recurrence for $\E[R_n]$ by taking expectation twice, and lastly to solve the recurrence. We omit the details of the derivation, but just present an intermediate step that we need for the follow-up study:
\begin{align*}
\E[R_j \given \field_{j - 1}] &= R_{j - 1} + \frac{1}{m}\bigl(2(2j + 3m - 5) - D_{1, j - 1} - D_{m, j - 1}\bigr) + 1
\\ &\ge R_{j - 1} + \frac{2j + 7m - 10}{m},
\end{align*}
suggesting that $\{R_j\}_{1 \le j \le n}$ is a super-martingale. Note that we use $j$ as subscript instead of $n$ to avoid potential confusion of notation. Consider a generic scale, $\xi_n = n^2$, free of $j$. It is obvious that $\{R_j / \xi_n \}_{1 \le j \le n}$ remains a super-martingale.

\begin{theorem}
	\label{Thm:almostsureR}
	There exists a random variable $\tilde{R}$ finite in its mean, such that
	$$\frac{R_n}{n^2} \convas \tilde{R},$$
	as $n \to \infty$.
\end{theorem}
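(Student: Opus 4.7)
The plan is to bypass the direct super-/sub-martingale route suggested in the paragraph before the theorem (which is awkward because $R_n/n^2$ is not cleanly monotone in conditional expectation) and instead apply the Doob decomposition in $n$. Write
\begin{equation*}
R_n = M_n + A_n, \qquad A_n := \sum_{j=1}^n \E[R_j - R_{j-1} \given \field_{j-1}],
\end{equation*}
so that $\{M_n\}_n$ is a mean-zero martingale with respect to $\{\field_n\}_n$ and $A_n$ is predictable. The strategy is to prove $A_n/n^2 \convas (2m-1)/m^2$ and $M_n/n^2 \convas 0$ in separate steps, which together force $\tilde R$ to be the constant $(2m-1)/m^2$, in agreement with the $L_1$ limit of Proposition~\ref{Prop:meanR}.

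For the predictable part, the conditional expectation computed just above the theorem can be rewritten as
\begin{equation*}
\E[R_j - R_{j-1} \given \field_{j-1}] = \frac{4j + 7m - 10 - D_{1, j-1} - D_{m, j-1}}{m}.
\end{equation*}
Since $(X_{1,n}, \ldots, X_{m,n})$ is multinomial with uniform cell probability $1/m$, it is a sum of i.i.d.\ indicators, so the strong law yields $X_{i,n}/n \convas 1/m$ and hence $D_{i,n}/n \convas 1/m$ for every $i$. A Cesaro-type step then gives $n^{-2}\sum_{j=1}^n D_{i, j-1} \convas 1/(2m)$, and substituting into $A_n$ produces $A_n/n^2 \convas 2/m - 1/m^2 = (2m-1)/m^2$.

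For the martingale part, I would first establish a deterministic bound $|R_j - R_{j-1}| \le C j$ for a constant $C = C(m)$ by examining the three cases (chosen spine node at position $1$, strictly between $2$ and $m-1$, or at $m$) and noting that each of the at most four degree-type contributions to $R_j - R_{j-1}$ is bounded by $j+1$. Thus $|\nabla M_j| \le 2Cj$ and $\E\bigl[(\nabla M_j/j^2)^2\bigr] = O(j^{-2})$, so the rescaled martingale $N_n := \sum_{j=1}^n \nabla M_j / j^2$ is $L_2$-bounded and converges almost surely by the standard $L_2$ martingale convergence theorem. Kronecker's lemma with weights $b_j = j^2 \nearrow \infty$ then translates convergence of $N_n$ into $M_n/n^2 \convas 0$. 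Combining the two pieces finishes the proof.

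The main obstacle is the predictable compensator: unlike in Lemma~\ref{Lem:martingale} for the Zagreb index, where the compensator was deterministic, here it depends on the random endpoint spine degrees $D_{1,j-1}$ and $D_{m,j-1}$, so one must upgrade the multinomial weak law to an almost-sure statement and then carry out a careful Cesaro step before the telescoping sum collapses to the required limit. Once that almost-sure asymptotic is in hand, the martingale half is handled by routine increment bounds plus Kronecker.
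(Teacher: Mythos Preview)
Your proof is correct and takes a genuinely different route from the paper. The paper argues directly via Doob's convergence theorem: it observes that $\{R_j\}_j$ is a sub-/super-martingale (the text says ``super'', though the displayed inequality actually gives a submartingale), notes that $R_j$ is increasing so $\sup_{j\le n}\E[R_j/n^2]=\E[R_n]/n^2<\infty$, and then invokes Doob's theorem to get an almost-sure limit $\tilde R$ with finite mean. Your Doob-decomposition approach instead splits $R_n$ into a predictable compensator (handled by the SLLN for the multinomial coordinates plus a Ces\`aro step) and a martingale with $O(j)$ increments (handled by $L_2$-bounded martingale convergence and Kronecker's lemma). What you gain is strictly more information: you identify $\tilde R$ as the deterministic constant $(2m-1)/m^2$, matching the $L_1$ limit of Proposition~\ref{Prop:meanR}, whereas the paper only asserts existence of some finite-mean limit. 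Your route also sidesteps the awkwardness you flagged---the paper's argument is really about the triangular array $\{R_j/n^2\}_{j\le n}$ for each fixed $n$, and passing from that to convergence of the diagonal $R_n/n^2$ as $n\to\infty$ is not made explicit there. The cost is a somewhat longer proof, but every step (increment bound, summability of $\E[(\nabla M_j/j^2)^2]$, Kronecker with $b_j=j^2$) is routine once set up.
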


\begin{proof}
	Notice that $R_j$ is increasing in $j$. So is $R_n / \xi_n$. We thus have
	\begin{equation*}
	\sup_j \E \left[\left|\frac{R_j}{n^2}\right|\right] = \E\left[\frac{R_n}{n^2}\right] = \frac{2m - 1}{m^2} + O\left(\frac{1}{n}\right) < + \infty.
	\end{equation*}
	We thus arrive at the conclusion by the {\em Doob's Convergence Theorem}.
\end{proof}

\section{Wiener index}
\label{Sec:Wiener}

In this section, we place focus on the Wiener index of a random caterpillar, i.e., $W_n = \mbox{{\tt Wiener}}(C_n)$. The Wiener index of a graph is defined as the sum of the lengths of the shortest paths (i.e., distances) of all pairs of nodes therein. Mathematically, given $G = (V, E)$, it is
$$W(G) = \sum_{u, v \in V} {\rm dist}(u, v),$$
where ${\rm dist}(u, v)$ is the distance between $u$ and $v$.

In chemistry, the Wiener index was first used to study carbon-carbon bonds between all pairs of carbon atoms in an alkane~\cite{Wiener}, and later was used to characterize QSPR for alkanes~\cite{Platt}. There is a variety of applications of the Wiener index in chemical graph theory and chemoinformatics, not limited to QSAR and QSPR modeling. For the sake of conciseness, we refer the interested readers to~\cite{Nikolic1995} and relevant references therein.

On the other hand, the Wiener index is very popular in the random graph community, and is extensively investigated for many random models, such as random binary search trees and random recursive trees~\cite{Neininger}, balanced binary trees~\cite{Bereg}, random digital trees~\cite{Fuchs}, random split trees~\cite{Munsonius2011}, random $b$-ary trees~\cite{Munsonius}, conditioned Galton-Watson trees~\cite{Fill} and more general rooted and unrooted trees~\cite{Wagner} and simply generated random trees~\cite{Janson}.

The Wiener index of a caterpillar is constituted by three classes of contributions:
\begin{enumerate}
	\item The total distances between spine nodes and spine nodes;
	\item The total distances between leaves and leaves;
	\item The total distances between spine nodes and leaves.
\end{enumerate}
We calculate the three contributions to $W_n$ one after another.

The contribution purely among spine nodes is simple, as it is fixed. That is
$$\sum_{i = 1}^{m - 1} \sum_{j = i + 1}^{m} (j - i) =  \frac{1}{2} \sum_{i = 1}^{m - 1} (m + 1 - i)(m - i) = \frac{1}{6}m(m^2 - 1).$$

To compute the contributions between leaves and leaves, we consider the following two scenarios. For all $1 \le i < j \le m$, the distance between a leaf attached to spine node $i$ and a leaf attached to spine node $j$ is $(j - i + 2)$, and the number of pairs is $X_{i, n}X_{j, n}$. On the other hand, for each $1 \le i \le m$, the distance between a leaf attached to spine node $i$ and another (different) leaf attached to the same spine node is $2$, and the number of pairs is $X_i(X_i - 1)/2$. Thus, the total contribution among leaves is
$$
\sum_{i = 1}^{m - 1} \sum_{j = i + 1}^{m} (j - i + 2) X_{i, n}X_{j, n} + \sum_{i = 1}^{m} X_{i, n}(X_{i, n} - 1).
$$

Lastly, we calculate the contributions between leaves and spine nodes. Note that the computation for this class is slightly different from the previous two. We consider the orders of indices $i$ and $j$ to avoid double counting, but it is unnecessary here. For all $1 \le i, j \le m$, the distance between a leaf attached to spine node $i$ and spine node $j$ is $(|i - j| + 1)$, and the number of leaves attached to spine node $i$ is $X_{i, n}$. Hence, the total contribution by this class is given by
$$
\sum_{i = 1}^{m} \sum_{j = 1}^{m} (|j - i| + 1) X_{i, n}.
$$

We are now ready to calculate the mean of the Wiener index. Likewise, we obtain a weak law.

\begin{prop}
	\label{Prop:meanW}
	For $n \ge 0$, the mean of the Wiener index of a caterpillar is
	$$\E[W_n] = \frac{(m^2 +6m - 1)n^2 + (m - 1)(2m^2 + 7m - 1)n + m^2(m^2 - 1)}{6m}.$$
	As $n \to \infty$, we have
	$$\frac{W_n}{n^2} \convL \frac{m^2 + 6m - 1}{6m}.$$
	The convergence takes place in probability as well.
\end{prop}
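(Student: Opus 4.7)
The plan is to take the decomposition of $W_n$ into three contributions already derived in the excerpt (spine--spine, leaf--leaf, and leaf--spine) and compute the expectation of each piece using the multinomial moments of $(X_{1,n},\ldots,X_{m,n})$ recalled in the proof of Proposition~\ref{Prop:meanR}: $\E[X_{i,n}] = n/m$, $\E[X^2_{i,n}] = n(n+m-1)/m^2$, and $\E[X_{i,n} X_{j,n}] = n(n-1)/m^2$ for $i\ne j$. The spine--spine sum is deterministic and equals $m(m^2-1)/6$, so no expectation is needed there.

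For the leaf--leaf contribution I would split it as $\sum_{i<j}(j-i+2)\E[X_{i,n}X_{j,n}] + \sum_{i=1}^m \E[X_{i,n}(X_{i,n}-1)]$. Using $\E[X_{i,n}X_{j,n}] = n(n-1)/m^2$ and the two closed forms $\sum_{1\le i<j\le m}(j-i) = m(m^2-1)/6$ and $\sum_{1\le i<j\le m} 1 = m(m-1)/2$, the first piece reduces to a simple polynomial in $n$ and $m$; the second piece collapses via $\E[X_{i,n}^2]-\E[X_{i,n}] = n(n-1)/m^2 + (n(m-1))/m^2$ summed over $i$. For the leaf--spine contribution I would pull the constant $\E[X_{i,n}]=n/m$ out of $\sum_{i,j}(|j-i|+1)X_{i,n}$ and evaluate $\sum_{i,j}|j-i| = 2\sum_{i<j}(j-i) = m(m^2-1)/3$ together with $\sum_{i,j} 1 = m^2$.

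Adding the three closed-form expressions and collecting powers of $n$ yields a polynomial whose leading coefficient in $n^2$ turns out to be $(m^2+6m-1)/(6m)$, which must be verified to match the stated formula after routine algebraic simplification; that matching of the lower-order coefficients is the only real bookkeeping obstacle, since the symbolic manipulation is straightforward but easy to mis-add. Once $\E[W_n]$ is in hand, dividing by $n^2$ and letting $n\to\infty$ gives
\[
\E\!\left[\frac{W_n}{n^2}\right] \longrightarrow \frac{m^2+6m-1}{6m},
\]
which is exactly $L_1$-convergence of the nonnegative random variable $W_n/n^2$ to its deterministic limit, since $\E\bigl|W_n/n^2 - (m^2+6m-1)/(6m)\bigr| = \E[W_n/n^2] - (m^2+6m-1)/(6m) \to 0$ (the random variables are bounded above by their expectation's limit up to a vanishing term, and one can also observe that $W_n/n^2$ is monotone enough that dominated or monotone convergence applies). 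Convergence in probability then follows from $L_1$-convergence by Markov's inequality, completing the proposition.
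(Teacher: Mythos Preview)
Your overall strategy—decompose $W_n$ into spine--spine, leaf--leaf, and leaf--spine contributions and evaluate each expectation via the multinomial moments of the $X_{i,n}$—is exactly what the paper does, and the algebra for $\E[W_n]$ goes through essentially as you outline. (One slip: $\E[X_{i,n}^2]-\E[X_{i,n}]=\E[X_{i,n}(X_{i,n}-1)]=n(n-1)/m^2$, not $n(n-1)/m^2+n(m-1)/m^2$; this is precisely the factorial-moment identity for a binomial, and correcting it makes the bookkeeping match.)

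The genuine gap is in your $L_1$-convergence step. You write
\[
\E\Bigl|\tfrac{W_n}{n^2}-\tfrac{m^2+6m-1}{6m}\Bigr|=\E\Bigl[\tfrac{W_n}{n^2}\Bigr]-\tfrac{m^2+6m-1}{6m},
\]
which would require $W_n/n^2\ge (m^2+6m-1)/(6m)$ almost surely; that is false (for instance, if all $n$ leaves attach to the same spine node, $W_n/n^2$ is close to $1$, well below the limit for large $m$). Convergence of the mean $\E[W_n/n^2]\to c$ alone does \emph{not} yield $W_n/n^2\to c$ in $L_1$. The paper also glosses over this (``The weak law \ldots\ follows immediately''), but a clean fix is available: by the strong law of large numbers $X_{i,n}/n\to 1/m$ almost surely for each $i$, and $W_n/n^2$ is a fixed continuous polynomial in the vector $(X_{1,n}/n,\ldots,X_{m,n}/n)$, so $W_n/n^2\to c$ almost surely. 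Since every pairwise distance in $C_n$ is at most $m+1$, we have the deterministic bound $W_n/n^2\le (m+1)\binom{n+m}{2}/n^2$, which is bounded in $n$; bounded (dominated) convergence then gives the $L_1$-convergence, and convergence in probability follows.
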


\begin{proof}
	Putting the three types of contributions that we have calculated together, and taking the expectation for all $X_{i, n}$'s, we get
	\begin{align*}
	\E[W_n] &= \frac{m(m^2 - 1)}{6} + \frac{m(m + 7)(m - 1)}{6} \, \E[X_{1, n}X_{2, n}] + m \, \E\left[X^2_{1, n}\right]
	\\ &\qquad{}- m \, \E\left[X_{1, n}\right] + \left(m^2 + 2 \times {m + 1 \choose 3} \right) \E[X_{1, n}]
	\\ &= \frac{m(m^2 - 1)}{6} + \frac{(m + 7)(m - 1)n(n - 1)}{6m} + \frac{(n + m - 1)n}{m}
	\\ &\qquad{} + \frac{(m + 4)(m - 1)n}{3}
	\\ &= \frac{(m^2 +6m - 1)n^2 + (m - 1)(2m^2 + 7m - 1)n + m^2(m^2 - 1)}{6m}.
	\end{align*}
	The weak law stated in the proposition follows immediately.
\end{proof}

There exists a finite-mean random variable, to which $W_n$ converges, after properly scaled (by $n^2$). The proof is done mutandis mutatis, with an application of the Doob's Convergence Theorem. We thus omit the details, but only state the theorem.

\begin{theorem}
	There exists a random variable $\tilde{W}$ finite in its mean, such that
	$$\frac{W_n}{n^2} \convas \tilde{W},$$
	as $n \to \infty$.
\end{theorem}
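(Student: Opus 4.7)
The plan is to follow the proof of Theorem~\ref{Thm:almostsureR} \emph{mutatis mutandis}, exploiting the three-part decomposition of $W_n$ (spine-spine, leaf-leaf, leaf-spine) established earlier in this section. The three key ingredients are: (i) almost-sure monotonicity of $W_n$ in $n$; (ii) the uniform $L^1$-bound on the rescaled sequence $\{W_n / n^2\}$ supplied by Proposition~\ref{Prop:meanW}; and (iii) an application of Doob's Convergence Theorem.

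First, I would observe that $W_n$ is strictly increasing in $n$ almost surely: at each stage a fresh leaf is attached to some spine node and contributes a positive distance to every pre-existing vertex of $C_{n-1}$, so $W_n - W_{n-1} > 0$. In particular $\{W_n\}_n$ is a non-negative sub-martingale with positive conditional drift.

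Second, Proposition~\ref{Prop:meanW} gives directly
\begin{equation*}
\sup_n \E\!\left[\frac{W_n}{n^2}\right] = \frac{m^2 + 6m - 1}{6m} + O\!\left(\frac{1}{n}\right) < +\infty,
\end{equation*}
so $\{W_n / n^2\}_n$ is bounded in $L^1$.

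Third, I would compute the conditional expectation $\E[W_j \mid \field_{j-1}]$ explicitly using the three-part decomposition: the spine-spine contribution is deterministic, while the leaf-leaf and leaf-spine contributions can be written as polynomials in the multinomial counts $X_{k, j-1}$. Averaging the resulting increment over the spine node selected at step~$j$ (each with probability $1/m$) produces a drift that decomposes into a deterministic $O(j)$ piece plus a linear functional of $(X_{1, j-1}, \ldots, X_{m, j-1})$. Rescaling by the deterministic sequence $\xi_n = n^2$ (free of $j$) identifies $\{W_j / \xi_n\}_{1 \le j \le n}$ as a sub-martingale whose $L^1$-norm is uniformly bounded by the display above. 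Doob's Convergence Theorem then yields an almost-sure limit $\tilde{W} := \lim_{n\to\infty} W_n / n^2$, and Fatou's Lemma together with the $L^1$-bound gives $\E[\tilde{W}] \le (m^2 + 6m - 1)/(6m) < +\infty$.

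The main obstacle is the bookkeeping of the conditional drift. Unlike the Zagreb case of Lemma~\ref{Lem:martingale}, where the drift was a clean deterministic function of $j$ that could be absorbed into an explicit compensator, the Wiener drift mixes a deterministic $O(j)$ term with a random linear functional of the leaf counts, and the two endpoints of the spine behave differently from the interior nodes. A careful grouping by spine position is required so that the rescaled sequence has the correct super/sub-martingale orientation under $\xi_n = n^2$; once this is in place, the invocation of Doob's Theorem is identical to that in the proof of Theorem~\ref{Thm:almostsureR}.
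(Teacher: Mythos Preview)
Your proposal follows exactly the approach the paper indicates: it states only that the proof proceeds \emph{mutatis mutandis} from Theorem~\ref{Thm:almostsureR} via Doob's Convergence Theorem and omits all details. Your three ingredients---almost-sure monotonicity of $W_n$, the $L^1$-bound on $W_n/n^2$ from Proposition~\ref{Prop:meanW}, and the triangular rescaling $\{W_j/\xi_n\}_{1\le j\le n}$ followed by Doob---reproduce precisely the template used for the Randi\'{c} index (and you correctly identify the process as a sub-martingale, whereas the paper calls the analogous $\{R_j\}$ a super-martingale despite the drift inequality pointing the other way).
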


An extension of the Wiener index is the {\em hyper Wiener index}, a relatively new topological index that characterizes molecular structure and feature of more complex chemical compounds. This index was proposed by~\cite{Randic1993} to analyze the structure of 2-Methylhexane, and later on was used in~\cite{Darafsheh} to investigate one-pentagonal carbon nanocone. The hyper Wiener index of $G = (V, E)$ is given by
$$W^{h}(G) = \sum_{u, v \in V} \bigl({\rm dist}(u, v) + {\rm dist}^2(u, v) \bigr).$$

Let $W^h_n$ be the hyper Wiener index of $C_n$. The computation of $W^h_n$ is indeed similar to that of $W_n$. We thus only list the key steps. We again decompose the index into three parts, where the first refers to spine-spine contributions:
$$\sum_{i = 1}^{m - 1} \sum_{j = i + 1}^{m} \bigl((j - i) + (j - i)^2\bigr) = \frac{m(m^3 + 2m^2 - m - 2)}{12},$$
which is deterministic. The second part is leaf-leaf contributions, the expectation of which is
\begin{align*}
&\frac{(m^2 + 11m + 46)(m - 1)m}{12} \, \E[X_{1, n}X_{2, n}] + 3m \left(\E\left[X^2_{1, n}\right] - \E[X_{1, n}]\right)
\\ &= \frac{(m^3 + 10m^2 +35m - 10)n^2 - (m + 10)(m + 1)(m - 1)n}{12m}.
\end{align*}
The last part is contributed by the distances between spine nodes and leaf nodes. Its expectation is
$$\frac{(m^2 + 7m + 18)(m - 1)m}{6} \, \E[X_{i, n}] = \frac{(m^2 + 7m + 18)(m - 1)n}{6}.$$

Putting three parts together, we get the expectation of the hyper Wiener index of $C_n$, presented in the next proposition.

\begin{prop}
	For $n \ge 0$, the mean of the hyper Wiener index of a caterpillar is
	\begin{align*}
	\E\left[W^h_n\right] &=  \frac{1}{12m} \bigl((m^3 + 10 m^2 + 35m - 10)n^2 + (2m^3 + 13m^2 + 25m - 10)
	\\ &\qquad{}\times (m - 1) n + m^2(m + 2)(m + 1)(m - 1)\bigr).
	\end{align*}
	As $n \to \infty$, we have
	$$\frac{W^h_n}{n^2} \convL \frac{m^3 + 10 m^2 + 35m - 10}{12m}.$$
	The convergence takes place in probability as well.
\end{prop}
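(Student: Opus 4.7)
The plan is to adapt verbatim the three-part decomposition developed for the Wiener index in Section~\ref{Sec:Wiener}, since the hyper Wiener index merely replaces each pairwise distance $d$ by $d + d^{2}$. I partition the unordered pairs of nodes in $C_{n}$ into spine-spine, leaf-leaf, and spine-leaf pairs, and handle each contribution separately before summing. The spine-spine contribution is deterministic: the pair of spine nodes at positions $i$ and $j$ contributes $(j-i)+(j-i)^{2}$, and the sum rearranges to $\sum_{k=1}^{m-1}(m-k)(k+k^{2})$, which collapses to $m(m^{3}+2m^{2}-m-2)/12$ by the usual closed forms for $\sum k$, $\sum k^{2}$, $\sum k^{3}$.

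For the leaf-leaf contribution, I split into same-spine and different-spine leaf pairs. Same-spine leaves are at distance $2$ and carry weight $2+4=6$; the number of such pairs at spine node $i$ is $X_{i,n}(X_{i,n}-1)/2$, so their total contribution is $3\sum_{i=1}^{m}X_{i,n}(X_{i,n}-1)$ and its expectation is $3m(\E[X_{1,n}^{2}]-\E[X_{1,n}])$. Different-spine leaves contribute $\sum_{1\le i<j\le m}\bigl((j-i+2)+(j-i+2)^{2}\bigr)X_{i,n}X_{j,n}$; exchangeability of the multinomial counts $X_{i,n}$ yields $\E[X_{i,n}X_{j,n}]=n(n-1)/m^{2}$ for all $i\ne j$, so this factor can be pulled out and the combinatorial coefficient $\sum_{k=1}^{m-1}(m-k)\bigl((k+2)+(k+2)^{2}\bigr)$ simplifies (shifting $\ell=k+2$) to the $(m^{2}+11m+46)(m-1)m/12$ claimed in the excerpt.

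For the spine-leaf contribution, a leaf attached to spine node $i$ and spine node $j$ are at distance $|j-i|+1$ and hence carry weight $(|j-i|+1)+(|j-i|+1)^{2}$. Summing over $j=1,\dots,m$ and using $\E[X_{i,n}]=n/m$, the double sum splits into its diagonal part (weight $2$) plus twice an off-diagonal triangular sum, producing $(m^{2}+7m+18)(m-1)m/6$. Adding the three expectations and collecting coefficients of $n^{2}$, $n$ and the constant gives the displayed formula for $\E[W^{h}_{n}]$. The $L_{1}$ convergence then follows as in Proposition~\ref{Prop:meanW}: $W^{h}_{n}/n^{2}$ is a quadratic function of the scaled multinomial counts $X_{i,n}/n$, each of which concentrates in $L_{p}$ around $1/m$ for every $p$, so dominated convergence gives convergence in $L_{1}$ to the leading coefficient $(m^{3}+10m^{2}+35m-10)/(12m)$, and convergence in probability is a weaker consequence.

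The only real obstacle is bookkeeping: keeping the three polynomial-in-$m$ coefficients aligned when merging them with the multinomial moments $\E[X_{1,n}]=n/m$, $\E[X_{1,n}^{2}]=(n+m-1)n/m^{2}$ and $\E[X_{1,n}X_{2,n}]=n(n-1)/m^{2}$, and verifying that the coefficient of $n$ in the sum truly collapses to $(2m^{3}+13m^{2}+25m-10)(m-1)/(12m)$. No new probabilistic ideas are needed beyond those used for $\E[W_{n}]$; the calculation is routine but error-prone.
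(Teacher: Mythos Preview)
Your proposal is correct and follows precisely the paper's own approach: the same three-part decomposition into spine--spine, leaf--leaf, and spine--leaf contributions, the same intermediate coefficients $m(m^{3}+2m^{2}-m-2)/12$, $(m^{2}+11m+46)(m-1)m/12$, and $(m^{2}+7m+18)(m-1)m/6$, and the same multinomial moments. Your justification for the $L_{1}$ convergence via concentration of $X_{i,n}/n$ is in fact a little more explicit than what the paper writes, but the overall argument is identical.
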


\section{Numerical experiments}
\label{Sec:numeric}

We conduct a series of numerical experiments to verify the results 
developed from Section~\ref{Sec:Hoover} to~\ref{Sec:Wiener}. Given a 
fixed $m =200$, we independently generate $R = 500$ replications of 
random caterpillars after $n = 5000$ evolutionary steps. For each 
simulated caterpillar, its Hoover, Zagreb, Randi\'{c}, Wiener and 
hyper Wiener indices are computed and recorded. We evaluate each of 
the simulation results one after another.

Our results from Section~\ref{Sec:Hoover} indicate that the Hoover 
index of a random caterpillar is a deterministic function of $m$ and 
$n$, and hence lacks randomness. From the simulation, we find that 
the Hoover index of each generated caterpillar is around $0.4807$, 
which is consistent with Proposition~\ref{Prop:meanH}.

Next, we compute the Zagreb index of each generated caterpillar, and 
standardize the (random) sample of Zagreb indices according to 
Propositions~\ref{Prop:meanZ} and~\ref{Prop:varZ}. The resulting 
histogram is depicted in Figure~\ref{Fig:zagreb}. In addition, we 
use the kernel method to estimate the density based on the sample of 
the standardized Zagreb indices, presented in 
Figure~\ref{Fig:zagreb} as well. The simulation results suggest that 
the Zagreb index (after 
proper scaling) of random caterpillars follows a Gaussian law, which 
agrees with Theorem~\ref{Thm:zagreb}. We further confirm the 
conclusion via the {\em Shapiro-Wilk} normality test, which yields 
that the $p$-value equals $0.5442$.
\begin{figure}[tbp]
	\begin{center}
		\includegraphics[width = 0.75\textwidth]{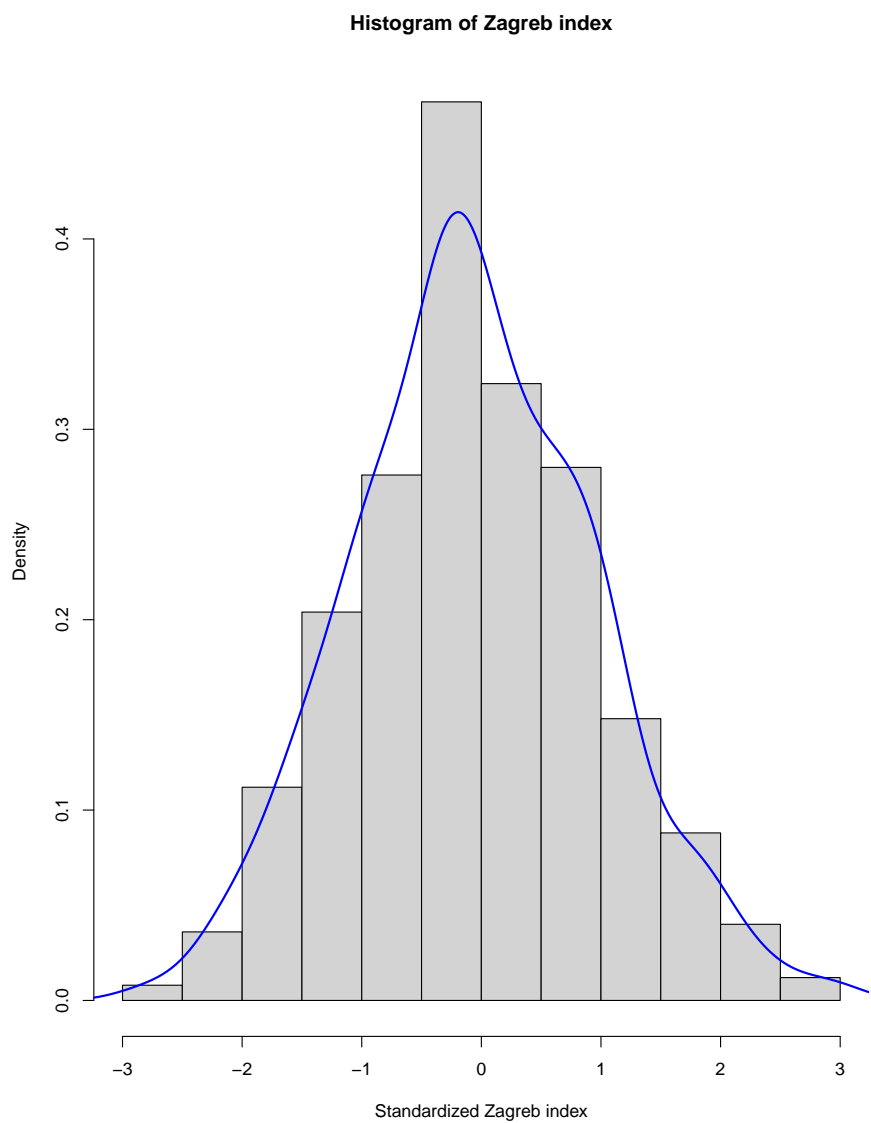}
	\end{center}
	\caption{Histogram of the standardized Zagreb indices of $500$ 
	independently generated random caterpillars with $m = 200$ and 
	$n = 5000$; the thick blue curve is the estimated density of the 
	sample.}
	\label{Fig:zagreb}
\end{figure}

For the Randi\'{c} index, the sample mean based on the simulated 
caterpillars is $0.012$ after scaled by $n^2$. This simulation 
result is reasonably close to the theoretical result $0.010$ from 
Proposition~\ref{Prop:meanR}. 

For the Wiener index, we use the {\tt igraph} package (from {\tt R} 
program) to compute the distances among the vertices in the 
generated caterpillars. The distance 
calculation requires a great deal of computation powers, so we 
adjust the 
simulation parameters to $R^{\prime} = 500$, $m^{\prime} = 50$ and 
$n^{\prime} = 2000$. The average of the Wiener indices (after 
properly scaled) of the simulated caterpillars is $9.7732$, almost 
identical to the theoretical result $9.7720$ from 
Proposition~\ref{Prop:meanW}. Under the same setting of 
$R^{\prime}$, $m^{\prime}$ and $n^{\prime}$, we also compute the 
hyper Wiener indices. The simulation and 
theoretical results (after properly scaled) are respectively given 
by $264.7783$ and $264.6214$, which completes the verification.

\section{Concluding remarks}
In this section, we address some concluding remarks, and propose some potential future work as well. We investigate several popular statistical indicies for a class of random caterpillars, including Gini index, Hoover index, Zagreb index, Randi\'{c} index and Wiener index (and its extension, hyper Wiener index). The mean of each index is computed. Specifically, we show that the limit distribution of the Zagreb index of random caterpillars is Gaussian. Topological index of random graphs is a burgeoning research area in the applied probability community. The follow-up study of the present paper may be given to further investigations of the limit distribution of the Wiener index and the analysis of the Randi\'{c} index with a more general parameter $\alpha$. Brand new research in this area is three folded:
\begin{enumerate}
	\item Propose novel indices according to practical needs; for instance, we can consider a global metric that captures the total weight of random caterpillars, where the weight can be added to nodes or edges or itself can be temporal.
	\item Investigate other statistical indices that are not yet covered in the present paper; for instance, the {\em Hosoya's $Z$ index} that counts the number of matchings in a graph and the {\em Balaban's index} that interprets graph connectivity via the associated distance matrix.
	\item Consider other types of random trees or more complex random networks that can be used for modeling molecular structures of chemical compounds, and investigate the relevant topological indices thereof. We will report our results elsewhere.
\end{enumerate}



\end{document}